\title[Multilinear Kakeya via Borsuk--Ulam]{The Endpoint Multilinear
  Kakeya theorem via the Borsuk--Ulam theorem}
\author{Anthony Carbery and Stef\'an Ingi Valdimarsson}
\thanks{}
\address{Anthony Carbery, 
School of Mathematics and Maxwell Institute for Mathematical Sciences, 
University of Edinburgh,
JCMB, 
King's Buildings, 
Mayfield Road, 
Edinburgh, EH9 3JZ, 
Scotland.} 
\email{A.Carbery@ed.ac.uk}
\address{Stef\'an Ingi Valdimarsson,
Science Institute,
University of Iceland,
Dunhagi 3,
107 Reykjavik, 
Iceland.} 
\email{siv@hi.is}
\date{22nd May 2012}
\newtheorem{theorem}{Theorem}
\newtheorem{proposition}{Proposition}
\newtheorem{lemma}{Lemma}
\begin{document}

\begin{abstract}
We give an essentially self-contained proof of Guth's recent endpoint multilinear
Kakeya theorem which avoids the use of somewhat sophisticated
algebraic topology, and which instead appeals to the Borsuk--Ulam theorem. 
\end{abstract}

\maketitle
\section{Introduction}
\noindent
The multilinear Kakeya problem was introduced in \cite{B}, and its
study began in earnest in \cite{BCT}, where the natural conjecture was established 
up to the endpoint.
Working in $\mathbb{R}^n$, we suppose that we are given $n$ transverse families 
$\mathcal{T}_1, \dots, \mathcal{T}_n$ of $1$-tubes, which means that each $T \in \mathcal{T}_j$ 
is a $1$-neighbourhood of a doubly-infinite line in $\mathbb{R}^n$ with direction 
$e(T) \in \mathbb{S}^{n-1}$, and that the directions $e(T)$ for $T \in \mathcal{T}_j$ all lie within a 
small fixed neighbourhood (depending only on the dimension $n$) of the $j$'th standard basis vector $e_j$. 

\medskip
\noindent
The question is whether for each $q \geq 1/(n-1)$ we have the inequality
\begin{equation*}\label{MKC}
\int_{\mathbb{R}^n} \left(\sum_{T_{1} \in \mathcal{T}_1} a_{T_1} \chi_{T_1}(x) \dots 
\sum_{T_{n} \in \mathcal{T}_n} a_{T_n} \chi_{T_n}(x) \right)^q \; dx
\leq C_{n,q} \left(\sum_{T_{1} \in \mathcal{T}_1} a_{T_1} \dots 
\sum_{T_{n} \in \mathcal{T}_n} a_{T_n}\right)^q
\end{equation*}
for nonnegative coefficients $a_{T_j}$. In \cite{BCT} this was proved for each $q > 1/(n-1)$ using a 
heat-flow technique which, because of certain error terms arising, did not apply at the endpoint 
$q = 1/(n-1)$. (For further background on this problem consult \cite{BCT}.)

\medskip
\noindent
More recently, Guth in \cite{G} established the endpoint case $q = 1/(n-1)$ using completely different 
techniques motivated in part by the polynomial method used by Dvir \cite{Dv} to solve the finite field 
Kakeya set problem, but which also relied upon a fairly heavy dose of algebraic topology, and which were 
therefore perhaps a little intimidating to the analyst or combinatorialist. In particular, Guth used the 
technology of cohomology classes, cup products and the Lusternik--Schnirelmann vanishing lemma in 
establishing his result. We believe
that the endpoint multilinear Kakeya theorem is of such significance and importance that a proof free of 
these techniques should be available, and so the purpose of this paper is to provide an argument leading to
Guth's result which does not rely upon such sophisticated algebraic topology, but whose input is instead the 
Borsuk--Ulam theorem. It is hoped therefore that this paper might lead
to further exploitation of Guth's techniques in a more flexible
setting. (For some recent works applying the multilinear
Kakeya perspective in other contexts, see \cite{B1}, \cite{B2} and \cite{B3}.) 

\medskip
\noindent
The Borsuk--Ulam theorem, while topological in nature, nevertheless has many proofs accessible 
to the analyst -- see for example \cite{Mat}, and also \cite{C} for a recent such proof. (See Section 
\ref{sec:BU} below for its statement.) The use of the Borsuk--Ulam theorem in the context of Kakeya theorems is 
by now natural, as it can be considered as a topological analogue of the elementary linear-algebraic statement that 
there are no linear injections $T: V \to W$ if $V$ and $W$ are finite-dimensional vector spaces 
with dim $V > $ dim $W$; this was a key element of Dvir's solution \cite{Dv} of the finite field 
Kakeya problem. It also features explicitly in Guth's warm-up discussion to the full result of \cite{G}.

\medskip
\noindent
In order to proceed, 
we place matters in a more general context which does not impose conditions on the directions 
of the tubes, nor requires the level of multilinearity to equal the dimension of the underlying 
euclidean space. Thus we now suppose that 
we are given $d$ arbitrary families of $1$-tubes $\mathcal{T}_1, \dots, \mathcal{T}_d$ in $\mathbb{R}^n$, 
where $d \leq n$. For $v_1, \dots v_d \in \mathbb{R}^n$ let $v_1 \wedge \cdots \wedge 
v_d$ denote the unsigned (i.e. nonnegative) $d$-dimensional volume of the parallelepiped whose sides are 
given by the vectors $v_1, \dots, v_d$.

\begin{theorem}[The Multilinear Kakeya Theorem]\label{main}
Let $2 \leq d \leq n$. Then there exists a constant $C_{d,n}$ such that if $\mathcal{T}_1, \dots, 
\mathcal{T}_d$ are families of $1$-tubes in $\mathbb{R}^n$, we have
\begin{equation}
\begin{aligned}
\int_{\mathbb{R}^n} & \left(\sum_{T_{1} \in \mathcal{T}_1} a_{T_1} \chi_{T_1}(x) \dots 
\sum_{T_{d} \in \mathcal{T}_d} a_{T_d} \chi_{T_d}(x) \; e(T_1) \wedge \dots \wedge e(T_d) \right)^{1/(d-1)} 
\; dx \\
& \leq C_{d,n} \left(\sum_{T_{1} \in \mathcal{T}_1} a_{T_1} \dots 
\sum_{T_{d} \in \mathcal{T}_d} a_{T_d}\right)^{1/(d-1)}.
\end{aligned}
\end{equation}
\end{theorem}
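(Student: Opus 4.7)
The strategy is to follow Guth's polynomial method, substituting the Borsuk--Ulam theorem for the cohomological arguments at the key topological step.

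First I would perform the standard reductions: by homogenization, dyadic discretization, and duality, the inequality is reduced to a level-set estimate on the measure of $\{x : M(x) \geq \lambda\}$, where $M$ is the weighted multilinear integrand appearing on the left-hand side of the theorem. One can further assume the coefficients $a_{T_j}$ are nonnegative integers, the tubes lie in a fixed large ball $B$, and the scale of ``$1$-tubes'' is comparable to the total combinatorial budget, so that matters are genuinely discrete.

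Next I would apply the Stone--Tukey polynomial ham-sandwich theorem iteratively to produce a polynomial $p$ of degree $D$ (to be optimised) whose zero set $Z(p)$ partitions $B$ into approximately $D^n$ open cells, each carrying comparable mass of the integrand $M^{1/(d-1)}$. Every tube $T$ then falls into one of two categories: either $T$ is essentially contained in a single cell, in which case either an induction on scale or a direct combinatorial estimate handles its contribution, or $T$ meets many cells and therefore has long intersection with $Z(p)$. The argument reduces to a \emph{visibility lemma}: if a polynomial $p$ of degree $D$ absorbs, in the weighted multilinear sense, a definite fraction of the tube mass, then $D$ must be bounded below in terms of the right-hand side of the inequality.

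To prove the visibility lemma I would construct a continuous odd map $f : S^M \to \mathbb{R}^M$ from a sphere parametrizing candidate polynomial data into a Euclidean space of vanishing constraints; a zero of $f$ would correspond to a polynomial of degree $\leq D$ satisfying all the required vanishing conditions along the tubes. Borsuk--Ulam forces such a zero once the source dimension exceeds the target dimension, yielding the sought lower bound on $D$. This plays the role Dvir's elementary linear algebra plays in the finite field Kakeya problem, as noted in the introduction.

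The principal obstacle will be designing this odd map so that the wedge weight $e(T_1) \wedge \cdots \wedge e(T_d)$ enters correctly. Unlike the classical single-family Kakeya setting, here we have $d$ transverse families of directions simultaneously, and the parameter sphere together with the target Euclidean space must accommodate the antipodal symmetry, the product structure over the $d$ families, and the $d$-form weighting, with dimension counts matching so as to produce the sharp endpoint exponent $1/(d-1)$. Once this topological bookkeeping is done correctly, reassembling the pieces with the polynomial partitioning completes the proof of Theorem \ref{main}.
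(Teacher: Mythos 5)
There is a genuine gap, and it sits exactly at the step you defer. Your sketch of the Borsuk--Ulam application --- an odd map whose zero ``would correspond to a polynomial of degree $\leq D$ satisfying all the required vanishing conditions along the tubes'' --- is Dvir-style parameter counting, and it cannot work over $\mathbb{R}$: a tube has positive measure, so a polynomial vanishing along it is identically zero, and in any case nothing useful follows from vanishing. What the endpoint argument actually requires (and what the paper proves, in Theorems \ref{algtop0} and \ref{algtop}) is the existence, for any mass distribution $M$ on the lattice of unit cubes, of a polynomial $p$ of degree $\lesssim \bigl(\sum_Q M(Q)^n\bigr)^{1/n}$ whose zero set has \emph{visibility} $\gtrsim M(Q)$ in every unit cube $Q$, i.e.\ large directional surface area in all directions simultaneously, quantified through the convex body $K(Z_p\cap Q)$. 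The Borsuk--Ulam input is then applied on the sphere $\mathcal{P}_k^\ast$ of polynomials of degree at most $k$, via the covering Lemma \ref{covering}: the ``bad'' polynomials are decomposed by cube, dyadic visibility level, ellipsoid colour (John ellipsoid of $K_\varepsilon(Z_p\cap Q)$ against a coloured net) and translate index, each piece separated from its antipode by the sign of $p$ on a small ellipsoid that $Z_p$ fails to bisect, and the total number of pieces is $\lesssim \sum_Q M(Q)^n \lesssim \dim\mathcal{P}_k$. None of this construction appears in your proposal, and the feature you flag as the ``principal obstacle'' --- how the wedge weight $e(T_1)\wedge\cdots\wedge e(T_d)$ enters --- is precisely what the visibility machinery is for: it enters through Lemma \ref{linalg}, which bounds $\bigl(e(T_1)\wedge\cdots\wedge e(T_d)\bigr)^{1/n}\mathrm{vis}(Z)$ by the geometric mean of the directional surface areas $\mathrm{surf}_{e(T_j)}(Z)$, while the degree enters through the cylinder estimate, Lemma \ref{cylinder}. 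So the heart of the proof is missing, not merely postponed.

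The outer frame you propose also diverges from what is needed at the endpoint. Stone--Tukey partitioning into $\sim D^n$ equal-mass cells, a cellular/wall dichotomy for tubes, reduction to level sets and induction on scale is the scheme of non-endpoint arguments; dyadic level-set summation generically costs a logarithm and loses the sharp exponent $1/(d-1)$, and doubly infinite $1$-tubes are never ``essentially contained in a single cell,'' so the dichotomy as stated does not make sense without further work. The paper instead reduces Theorem \ref{main}, via Proposition \ref{MK}, to producing factorizing functions $S_j(Q,T_j)$ with $e(T_1)\wedge\cdots\wedge e(T_d)\,M(Q)^n \lesssim S_1(Q,T_1)\cdots S_d(Q,T_d)$ and $\sum_{Q:\,T_j\cap Q\neq\emptyset} S_j(Q,T_j)\lesssim 1$; these are taken to be (mollified) directional surface areas $\lambda^{-1}\mathrm{surf}_{e(T_j),\varepsilon}(Z_p\cap Q)$ of the polynomial supplied by Theorem \ref{algtop}, and the summability along each tube is exactly the cylinder estimate. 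If you want to salvage your plan, you should replace the partitioning/induction frame by this factorization reduction and then supply the Borsuk--Ulam covering argument for the visibility theorem, which is where all the real work lies.
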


\medskip
\noindent
(The case $d=2$ is of course trivial.) 

\medskip
\noindent
The situation where the level of multilinearity is less than the ambient euclidean dimension was already 
addressed in \cite{BCT}, where once again the result was established up to the endpoint. The incorporation 
of the factor $e(T_1) \wedge \dots \wedge e(T_d)$ on the left-hand side is natural in view of the
affine-invariant formulation of the Loomis--Whitney inequality, and was considered 
in Section 7 of \cite{BG}, where Theorem \ref{main} was first proved. Indeed, when $d=n$, the statement of 
Theorem \ref{main} is affine-invariant.\footnote{The multilinear Kakeya theorem can also be cast in the following equivalent form when $d=n$. For a unit vector 
$\omega \in \mathbb{R}^n$ let $\Pi_\omega$ denote the hyperplane in $\mathbb{R}^n$ which is 
perpendicular to $\omega$ and which contains the origin. Let $\pi_\omega : \mathbb{R}^n \to \Pi_\omega$
be the orthogonal projection map. Then for nonnegative $g_j$ we have
$$ \int_{\mathbb{R}^n} \left(\int_{\mathbb{S}^{n-1}} \dots \int_{\mathbb{S}^{n-1}}
\left(\prod_{j=1}^n g_j(\omega_j, \pi_{\omega_j} x)\right)\omega_1 \wedge \dots \wedge \omega_n \; d\sigma(\omega_1) \dots d \sigma(\omega_n)\right)^{1/(n-1)} dx $$
$$\leq C_{n} \prod_{j=1}^n \left(\int_{\mathbb{S}^{n-1}} \int_{\Pi{_{\omega_j}}} g_j(\omega_j, \xi) 
d\xi d \sigma(\omega_j)\right)^{1/(n-1)}.$$} 
A variant of Theorem \ref{main} 
where lines are replaced by algebraic curves of bounded degree was also proved in \cite{BG} (and can 
likewise be established by replacing Guth's original argument for Theorem \ref{algtop0} below by 
that of the current paper). On the other hand, the results of \cite{BCT} have a somewhat more general scope
in so far as they apply to $1$-neighbourhoods of $k$-planes for arbitrary $k$, rather than just
$1$-neighbourhoods of lines, i.e. tubes, as in the present discussion.

\medskip
\noindent
The principal notion that Guth employs in proving the endpoint theorem is that of the 
{\em visibility}   vis$\,(Z)$ of a hypersurface $Z \subseteq \mathbb{R}^n$ -- see Section 
\ref{sect:vis} below for the definition, which differs from Guth's in
so far as in our treatment it (roughly)
scales as does $(n-1)$-dimensional Hausdorff measure $\mathcal{H}_{n-1}$ -- 
and the centrepiece of Guth's argument is the following result:

\begin{theorem}\label{algtop0}
Given a nonnegative function
$M$ defined on the lattice $\mathcal{Q}$ of unit cubes of $\mathbb{R}^n$,
there exists a non-zero polynomial $p$ such that
$$ {\rm{deg }} \; p \leq C_n \left(\sum_{Q \in \mathcal{Q}} M(Q)^n\right)^{1/n}$$
and such that if we set $Z = Z_p = \{x \in \mathbb{R}^n \; : \; p(x) =
0\}$, then for all $Q \in \mathcal{Q}$ we have 
$$ {\rm{vis }}\; (Z \cap Q) \geq C_n M(Q).$$
\end{theorem}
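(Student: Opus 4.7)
My plan is to recast Theorem~\ref{algtop0} as an instance of the classical polynomial ham-sandwich scheme, with Borsuk--Ulam supplying the topological step. By a standard approximation and truncation argument, I may assume that $M$ is supported on finitely many cubes and integer-valued with $M(Q) \geq 1$ on its support. For each such $Q$ set $m_Q := M(Q)$ and partition $Q$ into $m_Q^n$ congruent sub-cubes of side $1/m_Q$; let $\mathcal{S}$ denote the resulting family and $N := |\mathcal{S}| = \sum_Q m_Q^n \leq C_n \sum_Q M(Q)^n$. Choose $D$ minimal with $\binom{D+n}{n} > N$; then $D \leq C_n \bigl(\sum_Q M(Q)^n\bigr)^{1/n}$, which is the desired degree bound.

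On the unit sphere $S(V)$ of $V := \mathcal{P}_D$ (polynomials of degree $\leq D$) define
$$ \Phi : S(V) \to \mathbb{R}^N, \qquad \Phi(p)_S = \int_S \mathrm{sgn}(p(x)) \, dx. $$
This map is odd and continuous (the latter because $\{p = 0\}$ is Lebesgue-null for every non-zero $p$), and $\dim S(V) = \dim V - 1 \geq N$, so the Borsuk--Ulam theorem produces a non-zero $p \in V$ with $\Phi(p) = 0$; that is, $S \cap \{p > 0\}$ and $S \cap \{p < 0\}$ have equal Lebesgue measure for every $S \in \mathcal{S}$.

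On each sub-cube $S$ of side $1/m_Q$, the isoperimetric inequality for cubes gives $\mathcal{H}_{n-1}(Z_p \cap S) \geq c_n / m_Q^{n-1}$, since a half-volume subset of a cube cannot have boundary smaller than a central cross-section. Summing over the $m_Q^n$ sub-cubes of $Q$ yields $\mathcal{H}_{n-1}(Z_p \cap Q) \geq c_n m_Q \geq c_n M(Q)$, and combined with the scaling of visibility from Section~\ref{sect:vis} (together with a parallel per-sub-cube count of transversal crossings in each direction to handle the projected-with-multiplicity form of $\mathrm{vis}$), this gives $\mathrm{vis}(Z_p \cap Q) \geq c_n M(Q)$.

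The step I expect to be most delicate is the final one --- upgrading bisection on a coordinate-aligned sub-cube grid into a lower bound on $\mathrm{vis}(Z_p \cap Q)$ that is uniform across \emph{all} directions $\omega \in S^{n-1}$. The isoperimetric $\mathcal{H}_{n-1}$-bound is clean; the directional content of $\mathrm{vis}$ requires ruling out configurations of $Z_p$ whose area aligns with a single direction and collapses the orthogonal projection, which is where the bulk of the careful work will lie. A secondary but genuine issue is continuity of $\Phi$ near polynomials whose zero sets have positive Lebesgue measure, but these form a Zariski-closed proper subvariety of $V$ and are handled by standard approximation.
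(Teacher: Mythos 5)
Your argument is, almost verbatim, the paper's own warm-up Proposition \ref{warmup}, and it proves exactly what that proposition proves: a lower bound on $\mathcal{H}_{n-1}(Z_p\cap Q)$. The step you flag as ``delicate'' --- passing from bisection of a fixed coordinate-aligned grid of congruent sub-cubes to a lower bound on $\mathrm{vis}(Z_p\cap Q)$ --- is not delicate but false, and this is precisely the point of the paper's Informal example. Visibility is comparable to a \emph{geometric mean} of directional surface areas over approximately orthonormal directions (see \eqref{geomxx}), hence is dominated by $\mathcal{H}_{n-1}$; a zero set whose area is aligned with a single direction can bisect every sub-cube while having small visibility, and nothing in the Borsuk--Ulam step prevents such an output. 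Concretely, in $n=2$ take $M=N^{1/2}$ on a row of $N$ unit cubes, so your degree budget is $\sim N$ and your cells are sub-cubes of side $N^{-1/2}$. The polynomial whose zero set consists of $\sim N^{1/2}$ horizontal lines at heights $(2i-1)/(2N^{1/2})$ has degree $\sim N^{1/2}$, bisects every sub-cube, yet in each cube $Q$ its directional areas are $\sim N^{1/2}$ in the vertical direction and $O(1)$ horizontally, so $\mathrm{vis}(Z\cap Q)\sim N^{1/4}\ll N^{1/2}=M(Q)$. Your proposed fix (``a parallel per-sub-cube count of transversal crossings in each direction'') cannot work either: a flat sheet through the centre of a sub-cube bisects it with zero transversal crossings in the orthogonal directions, so there is no per-cell directional lower bound to sum.

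This is why the paper's proof of Theorem \ref{algtop0} looks quite different. The cells cannot be a fixed subdivision chosen from $M$ alone: their shape and orientation must track the convex body $K_\varepsilon(Z_p\cap Q)$, i.e.\ they depend on the unknown polynomial itself. Since one can no longer write down an odd map in advance, the paper replaces the direct zero-finding use of Borsuk--Ulam by the equivalent covering statement, Lemma \ref{covering}: for each ``bad'' $p$ (one with $\mathrm{vis}_\varepsilon(Z_p\cap Q)\le M(Q)$ for some $Q$) one selects an ellipsoid $E(p)$ close to $K_\varepsilon(Z_p\cap Q)$ from a fixed coloured net, packs $\sim\eta^{-n}2^{-rn}M(Q)^n$ translates of $\eta E(p)$ into $Q$, shows via Lemma \ref{manybisections} that $Z_p$ bisects only a small fraction of them, and uses a non-bisected translate (together with the colouring and the mollified surface areas, needed for the continuity in Lemma \ref{convergence}) to split the bad set into $\lesssim\sum_Q M(Q)^n$ antipodally separated pieces; these cannot cover $\mathcal{P}_k^\ast\cong\mathbb{S}^N$ when $k\sim(\sum_Q M(Q)^n)^{1/n}$, so a good polynomial exists. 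Your proposal contains none of this machinery, so as it stands it establishes only the morally weaker Proposition \ref{warmup}, not Theorem \ref{algtop0}. (A minor aside: your worry about continuity of $\Phi$ is vacuous, since every non-zero polynomial has Lebesgue-null zero set.)
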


\medskip
\noindent
It is in the proof of this result that Guth uses algebraic-topological techniques, and 
the main contribution of the present paper is to provide a proof of Theorem \ref{algtop0} which 
does not use such topological machinery, but is instead a consequence
of the Borsuk--Ulam theorem. (In fact, in our proof of Theorem
\ref{algtop0}, we do not use the Borsuk--Ulam theorem {\em per se} but
instead an equivalent Lusternik--Schnirelmann type covering statement. See Section \ref{sec:BU}.)
On the other hand, we must acknowledge that many of the arguments and
constructions of the present paper are inspired by those of Guth's approach.

\medskip
\noindent
In view of the connection between visibility and $(n-1)$-dimensional Hausdorff measure, and as a warm-up 
to our proof of Theorem \ref{algtop0}, we indicate how the Borsuk--Ulam theorem can be used to 
establish the following morally weaker variant of Theorem \ref{algtop0}.

\begin{proposition}\label{warmup}
Given a finitely supported function $M$ defined on the lattice $\mathcal{Q}$
of unit cubes of $\mathbb{R}^n$ and taking nonzero 
values
in $[1, \infty)$, there exists a
non-zero polynomial $p$ such that 
$$ {\rm{deg }} \; p \leq C_n \left(\sum_{Q\in\mathcal{Q}} M(Q)^n\right)^{1/n}$$
and such that for all $Q\in\mathcal{Q}$ 
$$ \mathcal{H}_{n-1} \; (Z \cap Q) \geq C_n M(Q).$$
\end{proposition}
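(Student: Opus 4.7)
The plan is to set $D := C_n \bigl(\sum_{Q \in \mathcal{Q}} M(Q)^n \bigr)^{1/n}$ for a dimensional constant $C_n$ to be chosen, and to let $V$ denote the space of real polynomials on $\mathbb{R}^n$ of degree at most $D$, so that $\dim V \geq c_n D^n$. For each $Q$ in the (finite) support of $M$, we partition $Q$ into $\lceil M(Q) \rceil^n$ congruent sub-cubes of sidelength $h_Q := 1/\lceil M(Q) \rceil$, and let $\mathcal{R}$ be the disjoint union of these sub-cubes over all such $Q$. Since $M(Q) \geq 1$ we have $|\mathcal{R}| = \sum_Q \lceil M(Q) \rceil^n \leq 2^n \sum_Q M(Q)^n$, so taking $C_n$ sufficiently large forces $\dim V \geq |\mathcal{R}| + 1$.

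For each $R \in \mathcal{R}$ we introduce the odd functional $\phi_R : V \setminus \{0\} \to \mathbb{R}$ defined by $\phi_R(p) := \int_R \mathrm{sgn}(p(x)) \, dx$. The collection $\Phi := (\phi_R)_{R \in \mathcal{R}}$ then defines an odd map from the unit sphere of $V$ into $\mathbb{R}^{|\mathcal{R}|}$, and the Borsuk--Ulam theorem in the Lusternik--Schnirelmann covering formulation alluded to in Section~\ref{sec:BU} (which accommodates the discontinuity of $\mathrm{sgn}$) furnishes a non-zero $p_0 \in V$ with $\phi_R(p_0) = 0$ for every $R \in \mathcal{R}$.

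Since the zero set of any non-zero polynomial has Lebesgue measure zero, the condition $\phi_R(p_0) = 0$ forces $\{p_0 > 0\} \cap R$ and $\{p_0 < 0\} \cap R$ each to have volume exactly $\tfrac{1}{2}|R|$. The relative isoperimetric inequality in the cube $R$ then yields
$$ \mathcal{H}_{n-1}(Z_{p_0} \cap R) \;\geq\; \mathcal{H}_{n-1}\bigl(\partial\{p_0 > 0\} \cap R^\circ\bigr) \;\geq\; c_n \, h_Q^{n-1}, $$
and summing over the $\lceil M(Q) \rceil^n$ sub-cubes of $Q$ gives
$$ \mathcal{H}_{n-1}(Z_{p_0} \cap Q) \;\geq\; c_n \lceil M(Q) \rceil^n h_Q^{n-1} \;=\; c_n \lceil M(Q) \rceil \;\geq\; c_n M(Q), $$
which is the desired conclusion.

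The principal obstacle is the appeal to Borsuk--Ulam: the functional $\phi_R$ is not continuous on $V \setminus \{0\}$ because $\mathrm{sgn}$ is discontinuous at $0$. The expected remedy is precisely the Lusternik--Schnirelmann covering reformulation of Borsuk--Ulam that the authors themselves earmark as the key input for Theorem~\ref{algtop0}, which is tailored for such semicontinuous situations. As a robust fallback one can regularise $\mathrm{sgn}$ by smooth odd approximants, apply the classical Borsuk--Ulam theorem in each regularised problem, and extract a convergent subsequence using compactness of the unit sphere of $V$.
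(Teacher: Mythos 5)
Your argument is essentially the paper's own proof: subdivide each $Q$ into $\sim M(Q)^n$ congruent subcubes, apply the Borsuk--Ulam theorem to the odd bisection functional on the unit sphere of the space of polynomials of degree $\lesssim \bigl(\sum_Q M(Q)^n\bigr)^{1/n}$, and then use the isoperimetric lower bound for a hypersurface bisecting a small cube. The only difference is your flagged ``obstacle'': in fact $\phi_R(p)=\operatorname{vol}(\{p>0\}\cap R)-\operatorname{vol}(\{p<0\}\cap R)$ \emph{is} continuous on $V\setminus\{0\}$ (if $p_m\to p\neq 0$ then $\chi_{\{p_m>0\}}\to\chi_{\{p>0\}}$ almost everywhere, since the zero set of $p$ has measure zero, and dominated convergence applies), so the classical Borsuk--Ulam theorem applies directly, exactly as in the paper, and your regularisation fallback, while valid, is unnecessary.
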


\begin{proof} Break up each $Q$ into $\sim M(Q)^n$ congruent subcubes $S$; note that 
altogether we have $\sim \sum_Q M(Q)^n$ small cubes $S$ of various sizes. Consider the map 
$$ F: p \mapsto \left\{\int_{\{p > 0\} \cap S} 1  -\int_{\{p < 0\} \cap S} 1 \right\}_{S}$$
defined on the vector space $\mathcal{P}_k$ of polynomials of degree at most $k$ in $n$ real
variables, which has dimension $\sim k^n$. Clearly $F$ is continuous, homogeneous of degree 
$0$ and odd.

\medskip
\noindent
So we can think of $F$ as 
$$F : \mathbb{S}^N\rightarrow \mathbb{R}^J $$
where $N \sim k^n$ and $J \sim \sum_Q M(Q)^n$.

\medskip
\noindent
So provided $N \geq J$ -- which we can arrange if $k \sim \left(\sum_Q
M(Q)^n\right)^{1/n}$ --  the Borsuk--Ulam theorem tells us that 
$F$ vanishes at some $p$. This means that the zero set $Z$ of $p$ exactly bisects each $S$. 

\medskip
\noindent
Now if $S$ is a subcube of $Q$, $S$ will have volume $\sim M(Q)^{-n}$ and
diameter $\sim M(Q)^{-1}$ and hence any bisecting surface will meet
it in a set of $(n-1)$-dimensional measure $\gtrsim
M(Q)^{-(n-1)}$. This will be true for each of the $M(Q)^n$ disjoint $S$'s whose union
is $Q$, so $Z$ will meet $Q$ in a set of $(n-1)$-dimensional measure $\gtrsim M(Q)^n \times
M(Q)^{-(n-1)} = M(Q)$, as was needed.
\end{proof}

\medskip
\noindent
In the proof we used the ``geometrically obvious'' fact that a
hypersurface bisecting the unit cube must have large surface area
inside the cube. For a discussion of this in the context of the unit
ball, see Lemma \ref{bisect} in the Appendix. Note that in the statements of both Theorem \ref{algtop0} 
and Proposition \ref{warmup}, a polynomial has the desired properties if and only if
any non-zero scalar multiple of it does; for this reason we may choose to search for a suitable 
polynomial within the unit sphere of the class of polynomials of a given degree. 

\medskip
\noindent
Proposition \ref{warmup} is morally weaker than Theorem \ref{algtop0} because not only does it place stronger 
conditions on $M$, but more importantly, in many situations of interest, we have 
${\rm{vis }}\; (Z \cap Q) \leq C_n \mathcal{H}_{n-1} \; (Z \cap Q)$ -- see \eqref{geomxx} below.

\medskip
\noindent
On an informal level, the fundamental difference between the proof of Theorem \ref{algtop0} and that of Proposition 
\ref{warmup} is that, roughly speaking, we no longer chop each cube $Q$ into $\sim M(Q)^n$ congruent 
{\em subcubes}, but we instead select, for each $Q$, an {\em ellipsoid} 
$E(Q)$ of volume $\sim M(Q)^{-n}$, so that $\sim M(Q)^n$ translates of $E(Q)$ essentially tessellate $Q$. 
However the {\em shape} and {\em orientation} of the ellipsoid $E(Q_0)$ will depend not only on the 
value of $M(Q_0)$ but on the whole ensemble $\{M(Q)\}_Q$, and is in effect an output of the 
Borsuk--Ulam theorem at the same time as it produces the desired 
polynomial. At the risk of over-simplifying matters, 
we now give an informal example which illustrates why, if we want the broad thrust of the proof of Proposition \ref{warmup}
to work in the context of Theorem \ref{algtop0}, the shape of the ellipsoid selected {\em must} depend on the totality of 
the function $M(Q)$. This example may be safely ignored on a first reading of the paper.

\medskip
\noindent
{\bf Informal example.} Let $n=2$ and consider the function $M(Q)$ which is supported on a row of 
$N$ unit cubes centred at $(k-1/2, 1/2)$ for $1 \leq k \leq N$, and takes
the value $N^{1/2}$ on each of these cubes. 
Then $\left(\sum_Q M(Q)^2\right)^{1/2} = N$. Consider the polynomial 
$$p(x) = (x_1 - 1) \dots (x_1 - N)\cdot(x_2 - 1/2N)(x_2 - 3/2N) \dots (x_2 - (2N-1)/2N)$$
which has degree $2N$, and let $Z$ denote its zero set. For a subset
$Z' \subseteq Z$ of $Z$, and for each $Q$\footnote{In this example we assume that the right-hand edge of a rectangle
belongs to the rectangle while the left-hand edge does not.}
in the support of $M$, consider the projections counted with 
multiplicities of $Z' \cap Q$, in the directions of the two standard basis vectors $e_1$ and $e_2$; let their
total lengths be $a_1(Z' \cap Q)$ and $a_2(Z' \cap Q)$ respectively, and let 
$$W(Z' \cap Q) := \{a_1(Z' \cap Q)a_2(Z' \cap Q)\}^{1/2}$$ 
be their geometric mean.  Now it transpires that the quantity $W(Z' \cap Q)$ is closely related to 
vis$(Z' \cap Q)$ -- see Section \ref{sect:vis} below -- and we shall
pretend (for the rest of this example) that $W$ 
really is the visibility. Note that if $Z_1$ and $Z_2$ are disjoint subsets of $Z \cap Q$ we 
have\footnote{The corresponding property for visibility fails.}
$$W(Z_1 \cup Z_2) \geq W(Z_1) + W(Z_2).$$ 
Now $a_1(Z \cap Q) = 1$ and $a_1(Z \cap Q) = N$ so that $W(Z \cap Q) 
= N^{1/2}$. We consider 
whether it is possible to break up each $Q$ into rectangles $R_j$ of area $1/N$
so that if $p$ bisects each rectangle then we can deduce that
$N^{1/2} =  W(Z \cap Q)$ by using
$W(Z \cap Q) \geq \sum_j W(Z \cap R_j)$. 

\smallskip
\noindent
Firstly, we could break up $Q$ into subcubes $R_j$ of side 
$N^{-1/2}$. Now for all $R_j$ except those which meet the right-hand edge of $Q$ we shall have 
$W(Z \cap R_j) = 0$, while for those which do meet the right hand edge of
$Q$ we have $W(Z \cap R_j) = N^{-1/4}$, which only gives
$W(Z \cap Q) \geq \sum_j W(Z \cap R_j) = N^{1/4}$; this is not adequate.

\smallskip
\noindent
Next, we could try breaking up $Q$ into vertical rectangles $R_j$ of sides $1/N \times 1$. Only the 
rectangle $R_0$ meeting the right-hand edge of $Q$ will have a non-zero value of $W(Z \cap R_j)$, 
and $W(Z \cap R_0) = 1$, giving $W(Z \cap Q) \geq \sum_j W(Z \cap R_j) = 1$, which is even worse.

\smallskip
\noindent
Finally, we could try breaking up $Q$ into horizontal rectangles $R_j$ of sides $1 \times 1/N$. In this case
each $W(Z \cap R_j) = N^{-1/2}$, resulting in the desired $W(Z \cap Q) \geq \sum_j W(Z \cap R_j) \geq N^{1/2}$.

\smallskip
\noindent
So only the third decomposition into horizontal rectangles is compatible with our needs. Once we have accepted 
that the polynomial $p$ above is more or less
``canonical'' for this $M$, we are essentially forced to break 
up each $Q$ into horizontal rectangles of sides $1 \times 1/N$ in order for our strategy to be successful. 
Crucially, observe that this 
decomposition reflects the global shape of the function $M$: if the support of $M$ had been along the 
$x_2$-axis we would have had to instead decompose each $Q$ into
vertical rectangles. The decomposition must therefore be aligned with the ``global
profile'' of $M$.  \hfill \qedsymbol

\bigskip
\noindent
To simplify the constructions in the proof we actually stop just short of fully
developing the moral outline given above. In fact we do not spend any time
constructing ellipsoids at the scale $\sim M(Q)^{-n}$ which would be needed
to really nail down the zero set of the polynomial we get from
the Borsuk--Ulam theorem. Instead we let ourselves be satisfied with
finding a \emph{good} polynomial $p$ with zero set $Z_p$ which satisfies
a given lower bound on the visibility
$\operatorname{vis}\,(Z_p\cap Q)$ for all cubes $Q$.
This we get by constructing ellipsoids for all \emph{bad}
polynomials,
which are those polynomials whose zero sets
have visibility less than desired on some cube.
Using these ellipsoids we show that the bad polynomials cannot cover
the unit sphere in the space of polynomials and in this way we see that
there must be a good polynomial, which gives the lower bounds mentioned
above.
Herein lies the reason for using the covering statement instead of the
Borsuk--Ulam theorem itself.
This very informal outline is developed more fully in
Section~\ref{sec:outline}.

\medskip
\noindent
For completeness, we also indicate in the following sections how Theorem \ref{main} 
follows from Theorem \ref{algtop0}, so that we give what is in essence a fully self-contained 
proof of Theorem \ref{main} (subject to the appeal to the Borsuk--Ulam theorem).
Throughout, $C$ and $c$ will denote generic constants which depend only 
on the dimension $n$ and the degree of multilinearity $d \leq n$; $P \lesssim Q$ and $P \gtrsim Q$
mean $P \leq C Q$ and  $P \geq C Q$ respectively, and $P \sim Q$ means both $P \lesssim Q$ and $P \gtrsim Q$.

\medskip
\noindent
{\em {\bf {\em Acknowledgements:}}} Both authors would like to thank Jon Bennett and Jim Wright for many illuminating 
conversations on the topics of this paper, and helpful remarks on early drafts of it. The first author would like to thank Larry Guth 
for sharing his insights into the philosophy behind the endpoint multilinear Kakeya theorem in Hyderabad in August 2010.

\section{A Preliminary reduction}
\medskip
\noindent
Recall that we have collections $ \mathcal{T}_j$, $1 \leq j \leq d$,
of $1$-tubes $T$ in $\mathbb{R}^n$ with directions $e(T) \in \mathbb{S}^{n-1}$. 
Let $\mathcal{Q}$ denote the lattice of unit cubes in $\mathbb{R}^n$.

\begin{proposition}\label{MK}
In order to prove Theorem \ref{main}, it suffices to establish the following assertion:
for every finitely supported nonnegative function $M : \mathcal{Q} \to \mathbb{R}$ satisfying
$\sum_Q M(Q)^n = 1$, there exist nonnegative functions 
$S_j : \mathcal{Q} \times \mathcal{T}_j \to \mathbb{R}$ such that for all
$T_j \in \mathcal{T}_j$ with $T_j \cap Q \neq \emptyset$,
\begin{equation}\label{need1}
e(T_1)\wedge \dots \wedge e(T_d) \, M(Q)^n \leq C S_1(Q,T_1) \dots S_d(Q,T_d)
\end{equation}
and, for all $j$ and all $T_j \in \mathcal{T}_j$ 
\begin{equation}\label{need2}
\sum_{Q \,: \, T_j \cap Q \neq \emptyset} S_j(Q, T_j) \leq C.
\end{equation}
\end{proposition}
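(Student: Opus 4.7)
The plan is to pass from the integral in Theorem~\ref{main} to a sum over unit cubes, make a self-referential choice of $M$, apply the hypothesis to obtain the $S_j$, and then close the loop via H\"older's inequality.

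First I would discretise. For $x \in Q$ one has $\chi_{T_j}(x) \le \mathbf{1}_{\{T_j \cap Q \neq \emptyset\}}$, so bounding the integrand pointwise on each unit cube and using $|Q|=1$ gives
$$\int_{\mathbb{R}^n} F(x)^{1/(d-1)}\,dx \le \sum_{Q \in \mathcal{Q}} F_Q^{1/(d-1)} =: K,$$
where $F$ denotes the integrand in Theorem~\ref{main} and $F_Q := \sum_{T_j \cap Q \neq \emptyset,\, 1\le j\le d} a_{T_1}\cdots a_{T_d}\, e(T_1)\wedge\cdots\wedge e(T_d)$. By the usual approximation we may assume that only finitely many $a_{T_j}$ are nonzero, so $F_Q \neq 0$ for only finitely many $Q$ and $K<\infty$; the case $K=0$ is trivial.

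Next I would make the self-referential choice $M(Q)^n := F_Q^{1/(d-1)}/K$, which renders $M$ nonnegative, finitely supported, and normalised by $\sum_Q M(Q)^n = 1$. The hypothesis then produces $S_j$ satisfying \eqref{need1} and \eqref{need2}. Multiplying \eqref{need1} by $a_{T_1}\cdots a_{T_d}$ and summing over all $(T_1,\dots,T_d)$ with each $T_j \cap Q \neq \emptyset$ (so the right-hand side factorises) yields
$$M(Q)^n F_Q \le C \prod_{j=1}^d U_j(Q), \qquad U_j(Q) := \sum_{T_j :\, T_j \cap Q \neq \emptyset} a_{T_j} S_j(Q, T_j).$$
Substituting the definition of $M(Q)^n$ and rearranging gives $F_Q^{1/(d-1)} \le C^{1/d} K^{1/d} \prod_{j=1}^d U_j(Q)^{1/d}$.

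Finally I would sum over $Q$ and apply H\"older's inequality with $d$ equal exponents, obtaining
$$K \le C^{1/d} K^{1/d} \prod_{j=1}^d \Bigl(\sum_Q U_j(Q)\Bigr)^{1/d}.$$
Interchanging the sums and invoking \eqref{need2} shows $\sum_Q U_j(Q) = \sum_{T_j} a_{T_j} \sum_{Q:\, T_j \cap Q \neq \emptyset} S_j(Q,T_j) \le C \sum_{T_j} a_{T_j}$; bringing the factor $K^{1/d}$ across yields the required $K \le C' \prod_j (\sum_{T_j} a_{T_j})^{1/(d-1)}$. The only conceptual step is the self-referential choice of $M$: because the hypothesis insists on the normalisation $\sum_Q M(Q)^n=1$, one is forced to divide by the very quantity $K$ to be bounded and then bootstrap. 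Everything else is routine Fubini and H\"older bookkeeping with the exponents $n$, $d$ and $d-1$, and I anticipate no substantive obstacle.
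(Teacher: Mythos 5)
Your proposal is correct and is essentially the paper's own argument: the self-referential choice $M(Q)^n \sim F_Q^{1/(d-1)}$, the pointwise/unit-cube discretisation, summing \eqref{need1} over tuples of tubes meeting $Q$, H\"older with $d$ equal exponents, and closing via \eqref{need2} and absorbing the $K^{1/d}$ factor. The only (cosmetic) differences are that you normalise $M$ directly by dividing by $K$ and keep general coefficients $a_{T_j}$, whereas the paper first reduces to $a_{T_j}=1$ and instead removes the normalisation $\sum_Q M(Q)^n=1$ from the hypothesis by homogeneity before running the identical computation.
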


\begin{proof}
Firstly, if we can find $S_j$ as in the statement of the proposition, homogeneity dictates that
for every finitely supported nonnegative function $M : \mathcal{Q} \to \mathbb{R}$
there exist nonnegative functions 
$S_j : \mathcal{Q} \times \mathcal{T}_j \to \mathbb{R}$ such that for all
$T_j \in \mathcal{T}_j$ with $T_j \cap Q \neq \emptyset$,
\begin{equation}\label{tyu}
e(T_1)\wedge \dots \wedge e(T_d) \, M(Q)^n \leq C S_1(Q,T_1) \dots S_d(Q,T_d) \left(\sum_Q M(Q)^n\right)^{(n-d)/n}
\end{equation}
and, for all $j$ and all $T_j \in \mathcal{T}_j$ 
\begin{equation}\label{uyt}
\sum_{Q \,: \, T_j \cap Q \neq \emptyset} S_j(Q, T_j) \leq C \left( \sum_Q M(Q)^n \right)^{1/n}.
\end{equation}

\medskip
\noindent
Secondly, we note that by the $l^1$ nature of the right hand side in
Theorem \ref{main}, we may
assume that the sets $\mathcal{T}_j$ are finite and that all the coefficients $a_{T_j}$ are equal to $1$.

\medskip
\noindent
For a unit cube $Q$ let 
$$F(Q) = \sum_{T_j \in \mathcal{T}_j \mbox{ with }T_j \cap Q \neq \emptyset} 
e(T_1)\wedge \dots \wedge e(T_d).$$
It then suffices to prove
$$ \sum_Q F(Q)^{1/(d-1)} \leq C  \left(\# \mathcal{T}_1 \dots \# \mathcal{T}_d \right)^{1/(d-1)}.$$
Let $M(Q)^n = F(Q)^{1/(d-1)} = F(Q)^{(1/(d-1) - 1/d)d}$, so that 
\begin{eqnarray*}
\begin{aligned}
&\sum_Q F(Q)^{1/(d-1)} 
= \sum_Q F(Q)^{1/d} M(Q)^{n/d} \\
= & \sum_Q \left(\sum_{T_j \in \mathcal{T}_j \mbox{ with }T_j \cap Q \neq \emptyset} 
e(T_1)\wedge \dots \wedge e(T_d)\right)^{1/d}M(Q)^{n/d} \\
= &\sum_Q \left(\sum_{T_j \in \mathcal{T}_j \mbox{ with }T_j \cap Q \neq \emptyset} 
e(T_1)\wedge \dots \wedge e(T_d) \; M(Q)^n \right)^{1/d} \\
\leq C & \sum_Q \left(\sum_{T_j \in \mathcal{T}_j \mbox{ with }T_j \cap Q \neq \emptyset} 
S_1(Q,T_1) \dots S_d(Q,T_d) \left(\sum_Q M(Q)^n\right)^{(n-d)/n} \right)^{1/d} \\
= C & \sum_Q \left(\prod_{j=1}^d \sum_{T_j \in \mathcal{T}_j \mbox{ with }T_j \cap Q \neq \emptyset} 
S_j(Q,T_j)\right)^{1/d}\left(\sum_Q M(Q)^n\right)^{(n-d)/dn} \\
\leq C & \prod_{j=1}^d \left(\sum_Q \sum_{T_j \in \mathcal{T}_j \mbox{ with }T_j \cap Q \neq \emptyset} 
S_j(Q,T_j)\right)^{1/d}\left(\sum_Q M(Q)^n\right)^{(n-d)/dn} \\
= C & \prod_{j=1}^d \left(\sum_{T_j \in \mathcal{T}_j} \sum_{Q \mbox{ with }T_j \cap Q \neq \emptyset}
S_j(Q,T_j)\right)^{1/d}\left(\sum_Q M(Q)^n\right)^{(n-d)/dn} \\
\leq C & \prod_{j=1}^d \left( \# \mathcal{T}_j\right)^{1/d} \left( \sum_Q M(Q)^n \right)^{1/n}
\left(\sum_Q M(Q)^n\right)^{(n-d)/dn}\\
= C & \prod_{j=1}^d \left( \# \mathcal{T}_j\right)^{1/d} \left( \sum_Q F(Q)^{1/(d-1)} \right)^{1/d}
\end{aligned}
\end{eqnarray*}
where the inequalities follow from \eqref{tyu}, H\"older's inequality and \eqref{uyt} respectively.
Rearranging, we obtain
$$ \left(\sum_Q F(Q)^{1/(d-1)}\right)^{(d-1)/d} 
\leq C  \prod_{j=1}^d \left( \# \mathcal{T}_j\right)^{1/d},$$
from which the result follows.
\end{proof}

\medskip
\noindent
Interestingly, the line of argument here can be reversed in certain
circumstances: assuming that the special case of the multilinear 
Kakeya theorem for transverse families of tubes $\mathcal{T}_j$
holds, it follows that for all $M$ one can 
find $S_j$ satisfying \eqref{tyu} and 
\eqref{uyt}. See \cite{CV} for more details.

\section{Directional surface area and visibility}\label{sect:vis}

\medskip
\noindent
We follow Guth \cite{G} and Bourgain--Guth \cite{BG} in defining the functions $S_j$ and 
establishing their desired properties \eqref{need1} and \eqref{need2}. In order to do this 
some geometric notions are required. We first recall the notion of directional surface area 
(termed ``directed volume'' by Guth) of a hypersurface $Z \subseteq \mathbb{R}^n$ in 
the direction of a unit vector $e$. If the element of surface area of $Z$ is denoted by 
$dS = d \mathcal{H}_{n-1}|_S$, and $e$ is a unit vector, the element of the component of 
surface area of $Z$ perpendicular to $e$ is $ |e \cdot n(x)| dS(x)$ where $n(x)$ is the 
unit normal at $x$ (which is assumed to make sense for $\mathcal{H}_{n-1}$--almost every 
$x \in Z$).
Thus the {\bf directional surface area of $Z$ in the direction $e \in \mathbb{S}^{n-1}$} is defined
as
$$ \mbox{ surf}_e(Z) = \int_Z |e \cdot n(x)|\; dS(x).$$
If $Z$ is given by the graph of a function $\Gamma : \Omega \subseteq \mathbb{R}^{n-1} \to 
\mathbb{R}$ above the hyperplane $x_n = 0$, then its directional surface area in the direction 
$e_n$ is simply the $(n-1)$-dimensional area of $\Omega$. If $Z$ is given by disjoint graphs of functions
above  the hyperplane $x_n = 0$ then its directional surface area in the direction $e_n$ is just 
$\int_{\mathbb{R}^{n-1}} J(y) \; dy$ where $J(y)$ is the number of times the line through $y$ parallel 
to $e_n$  passes through $Z$. These considerations lead immediately to Guth's ``cylinder estimate'':

\begin{lemma}[Guth's cylinder estimate]\label{cylinder}
If $T$ is a $1$-tube in $\mathbb{R}^n$ and $Z = \{x \; : \; p(x) = 0 \}$ is the zero hypersurface 
of a non-zero polynomial $p$ of degree at most $k$, then 
$$ {\rm{ surf}}_{e(T)}(Z \cap T) \leq C k.$$
\end{lemma}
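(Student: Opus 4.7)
The plan is to reinterpret the directional surface area as an integral over the cross-section of the tube, then use the one-dimensional fact that a nonzero univariate polynomial of degree at most $k$ has at most $k$ roots.

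First I would reformulate $\mathrm{surf}_{e(T)}(Z \cap T)$ via projection. Let $\pi : \mathbb{R}^n \to e(T)^\perp$ be orthogonal projection along $e(T)$. For the (smooth part of the) hypersurface $Z$, the factor $|e(T)\cdot n(x)|$ is precisely the Jacobian of $\pi|_Z$ at $x$, so by the area formula (or coarea formula),
\begin{equation*}
\mathrm{surf}_{e(T)}(Z \cap T) \;=\; \int_{e(T)^\perp} \#\bigl(Z \cap T \cap \pi^{-1}(y)\bigr)\, dy,
\end{equation*}
where the integrand counts how many times the line through $y$ parallel to $e(T)$ meets $Z$ inside $T$. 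Since $\pi(T)$ is essentially a unit disk in $e(T)^\perp$, the integral is supported on a set of area $\lesssim 1$.

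Next I would bound the integrand pointwise. For each $y \in \pi(T)$ such that $p$ does not vanish identically on the line $\ell_y := \pi^{-1}(y)$, the restriction $p|_{\ell_y}$ is a one-variable polynomial of degree at most $k$, hence has at most $k$ zeros. Thus $\#(Z \cap \ell_y) \leq k$ for such $y$. The exceptional set of $y$ for which $p|_{\ell_y} \equiv 0$ has measure zero in $\pi(T)$: otherwise $p$ would vanish on a set of positive $n$-dimensional measure (a cylinder of positive cross-section inside $T$), forcing $p \equiv 0$. Combining these,
\begin{equation*}
\mathrm{surf}_{e(T)}(Z \cap T) \;\leq\; k \cdot |\pi(T)| \;\leq\; C k.
\end{equation*}

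The only technical points are (a) justifying the area-formula identification of the directional surface area with the multiplicity integral (valid since the singular locus of $Z$ is a lower-dimensional semi-algebraic set, hence of $\mathcal{H}_{n-1}$-measure zero, and the smooth part has a well-defined unit normal), and (b) disposing of the measure-zero set of lines on which $p$ vanishes identically. Neither is deep; I would expect the main subtlety, if any, to lie in cleanly handling singular points of $Z$ and lines tangent to $Z$, both of which contribute nothing to the projection measure.
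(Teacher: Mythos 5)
Your proof is correct and follows essentially the same route as the paper, which treats the lemma as an immediate consequence of the observation preceding it: that $\mathrm{surf}_{e}(Z\cap T)$ equals the integral over the cross-section of the number of times each line parallel to $e$ meets $Z$, which is at most $k$ by the one-variable root count. Your write-up simply supplies the technical justifications (area formula, singular locus, lines on which $p$ vanishes identically) that the paper leaves implicit.
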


\medskip
\noindent
Secondly, we associate a fundamental centrally-symmetric convex body $K(Z)$ to a hypersurface $Z$. Indeed, with $\mathbb{B}$ 
denoting the unit ball of $\mathbb{R}^n$, define
\begin{equation}\label{Kdefn}
K(Z) := \{ u \in \mathbb{B} \; : \; \mbox{ surf}_{\widehat{u}}(Z) \leq 1/|u|\}.
\end{equation}
Here $\widehat{u}$ is the unit vector in the direction of $u$. (Notice that if $Z$ is such that 
surf$_e(Z) \geq 1$ for all unit vectors $e$, then the requirement that $u$ lie in $\mathbb{B}$ is 
superfluous.) It is clear that $K(Z)$ is symmetric. To see that it is in fact convex, note that $u$ satisfies 
$\mbox{ surf}_{\widehat{u}}(Z) \leq 1/|u|$
if and only if $\int_Z | u \cdot n | \; dS \leq 1$; this condition is clearly retained under 
convex combinations of $u$'s. We then define \footnote{Guth's definition of visibility is 
the $n$'th power of the one given here, but we find the current definition more natural for 
three reasons: firstly it allows us to emphasise the ``$L^n$''-aspect of the statement of 
Theorem \ref{algtop0} which, at least when $n=d$, is no coincidence, and is a reflection of 
the fact that the optimal $L^p$ estimate for the linear Kakeya problem in $\mathbb{R}^n$ 
is conjectured to be at $p=n$; secondly it scales roughly as does $(n-1)$-dimensional Hausdorff 
measure which permits the comparison with Proposition \ref{warmup}; and thirdly, in the theory of 
finite-dimensional Banach spaces, if $K$ is a convex body in isotropic position,
the quantity $\left(\mbox{vol } K\right)^{-1/n}$ arises naturally as its isotropic constant.} the 
{\bf visibility} of $Z$ as
$$ \mbox{ vis}(Z) := \left(\mbox{vol } K(Z)\right)^{-1/n}.$$
Note that since $K(Z) \subseteq \mathbb{B}$ we always have $\mbox{ vis}(Z) \geq C$.

\medskip
\noindent
The next lemma allows us to relate visibilty to geometric means of directional surface areas.

\begin{lemma}\label{linalg}
Suppose that for all unit vectors $e \in \mathbb{R}^n$ we have $ 1 \lesssim {\rm{ surf}}_e (Z) \lesssim D$.
If $v_1, \dots , v_d,$ $( 1 \leq d\leq n )$ are unit vectors, then
$$ \left(v_1 \wedge \dots \wedge v_d\right)^{1/n} {\rm vis }(Z) 
\leq C D^{(n-d)/n} \left({\rm surf }_{v_1}(Z) \dots \, {\rm surf }_{v_d}(Z)\right)^{1/n}.$$   
\end{lemma}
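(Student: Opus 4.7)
The plan is to interpret $\mathrm{vis}(Z)$ as the reciprocal $n$th root of the volume of the unit ball of a natural seminorm and then exhibit an explicit cross-polytope of large volume inside that ball.

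First I would introduce the seminorm
$$\|u\|_Z := \int_Z |u \cdot n(x)|\,dS(x),$$
which for $u \neq 0$ equals $|u|\,\mathrm{surf}_{\widehat u}(Z)$. The triangle inequality for $|\cdot|$ under the integral, and scalar homogeneity, make this a (semi)norm, and the hypothesis $1 \lesssim \mathrm{surf}_e(Z) \lesssim D$ on $\mathbb{S}^{n-1}$ gives $|u| \lesssim \|u\|_Z \lesssim D|u|$. Writing $B_Z := \{u : \|u\|_Z \leq 1\}$, one has $K(Z) = B_Z \cap \mathbb{B}$ directly from \eqref{Kdefn}. The lower bound on $\|\cdot\|_Z$ shows that $B_Z$ is contained in a ball of universally bounded radius, so $B_Z$ and $K(Z)$ differ by at most a dimensional factor: $\mathrm{vol}\,K(Z) \sim \mathrm{vol}\,B_Z$.

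Next I would reduce the claim to the volume estimate
$$\mathrm{vol}\,B_Z \;\gtrsim\; \frac{v_1 \wedge \cdots \wedge v_d}{D^{n-d}\,\mathrm{surf}_{v_1}(Z)\cdots\mathrm{surf}_{v_d}(Z)},$$
which after taking $-1/n$th powers and using $\mathrm{vis}(Z) = (\mathrm{vol}\,K(Z))^{-1/n}$ is equivalent to the stated inequality. To produce this much volume inside $B_Z$, I would build a cross-polytope. We may assume $v_1,\dots,v_d$ are linearly independent (otherwise the LHS is zero). Setting $s_j := \mathrm{surf}_{v_j}(Z)$ and $u_j := v_j/s_j$ one has $\|u_j\|_Z = 1$, so $u_j \in B_Z$. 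Let $V := \mathrm{span}(v_1,\dots,v_d)$, pick an orthonormal basis $w_1,\dots,w_{n-d}$ of $V^\perp$, and observe that the upper bound $\|w_k\|_Z \lesssim D$ implies $cw_k/D \in B_Z$ for a small universal $c > 0$.

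Finally, the symmetric convex hull $P$ of $\{\pm u_1,\dots,\pm u_d,\pm cw_1/D,\dots,\pm cw_{n-d}/D\}$ lies in $B_Z$, and the cross-polytope volume formula gives
$$\mathrm{vol}\,P \;=\; \frac{2^n}{n!}\,\bigl|\det(u_1,\dots,u_d,cw_1/D,\dots,cw_{n-d}/D)\bigr|.$$
Because $u_j \in V$ and the $w_k$ form an orthonormal basis of $V^\perp$, choosing coordinates adapted to $V \oplus V^\perp$ makes this matrix block-diagonal, so the determinant factors as $(u_1 \wedge \cdots \wedge u_d)\,(c/D)^{n-d}$. Substituting $u_j = v_j/s_j$ produces the desired lower bound on $\mathrm{vol}\,B_Z$, and taking an $n$th root yields the lemma.

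The only real subtlety is the mismatch between $K(Z)$ and $B_Z$ caused by the extra constraint $u \in \mathbb{B}$ in \eqref{Kdefn}; the lower bound $\|u\|_Z \gtrsim |u|$ is what makes this mismatch cost only a constant. Everything else is linear algebra: recognising $\mathrm{surf}_{\widehat u}(Z)$ as (essentially) a norm, and using the orthogonal decomposition of $\mathbb{R}^n$ relative to $V$ to factor the determinant cleanly.
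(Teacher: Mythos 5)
Your proof is correct and is essentially the paper's argument: both inscribe in the visibility body a symmetric cross-polytope spanned by the points $\pm c\,v_j/\operatorname{surf}_{v_j}(Z)$ together with vectors orthogonal to $\operatorname{span}(v_1,\dots,v_d)$ scaled by $\sim 1/D$, and use convexity plus the block/wedge factorisation of the determinant to bound $\operatorname{vol}K(Z)$ from below. Your detour through the norm ball $B_Z$ and the comparison $\operatorname{vol}B_Z\sim\operatorname{vol}K(Z)$ is just a repackaging of the paper's direct placement of these points in $K(Z)$, where the hypothesis $\operatorname{surf}_e(Z)\gtrsim 1$ handles the constraint $u\in\mathbb{B}$.
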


\begin{proof}
We may assume that $\{v_1, \dots , v_d\}$ is linearly independent and we extend it to a 
basis $\{v_1, \dots, v_n\}$ where $v_{d+1}, \dots , v_n$ are mutually orthogonal unit vectors 
which are also orthogonal to the span of $\{v_1, \dots, v_d\}$. 

\medskip
\noindent
Since surf$_e(Z) \gtrsim 1$ for all $e$, we have that 
$\pm c v_j/\operatorname{surf}_{v_j}(Z) \in K(Z)$ for all $j$, so that by convexity of $K(Z)$
\begin{eqnarray*}
\begin{aligned}
\mbox{vol }K(Z) &\geq C \; v_1 \wedge \dots \wedge v_n \; \prod_{j=1}^n \mbox{surf }_{v_j}(Z)^{-1} \\
&=  C \; v_1 \wedge \dots \wedge v_d \; \prod_{j=1}^d \mbox{surf }_{v_j}(Z)^{-1} 
\prod_{j=d+1}^n \mbox{surf }_{v_j}(Z)^{-1} \\
&\geq   C D^{-(n-d)} \; v_1 \wedge \dots \wedge v_d \; \prod_{j=1}^d \mbox{surf }_{v_j}(Z)^{-1},
\end{aligned}
\end{eqnarray*}
from which the result follows.
\end{proof}


\medskip
\noindent
It is not hard to show that under the assumption that surf$_e(Z) \gtrsim 1$ for all $e$,
$$ \mbox{ vis}(Z) \sim \prod_{j=1}^n \mbox{ surf}_{e_j}(Z)^{1/n}$$
where $e_1, \dots, e_n$ are the principal directions of the John
ellipsoid associated to $K(Z)$ (i.e. the ellipsoid of maximal volume contained in $K(Z)$ --
see \cite{John}) and hence  
\begin{equation}\label{geomxx}
\mbox{vis}(Z) \sim \inf_{\{f_j\} \mbox{ approx orthonormal }} 
\prod_{j=1}^n \mbox{ surf}_{f_j}(Z)^{1/n}
\end{equation}
where we say that the unit vectors $f_1, \dots , 
f_n$ are  ``approximately orthonormal'' if their wedge product
satisfies $f_1 \wedge \dots \wedge f_n \geq c_n$ for a suitable
dimensional constant $c_n$. By the arithmetic-geometric mean
inequality the right-hand side of
\eqref{geomxx} is in turn dominated by $\mathcal{H}_{n-1}(Z)$. This 
shows in particular that Theorem \ref{algtop0} is morally stronger than
Proposition \ref{warmup}. 

\medskip
\noindent
The John ellipsoid $E$ of a symmetric convex body $K$ satisfies $E \subseteq K \subseteq
n^{1/2}E$, and combining the latter inclusion with Lemma \ref{cylinder} we obtain:

\begin{lemma}\label{three}
Let $p$ be a non-zero polynomial such that for some unit vector $e$,
$\operatorname{surf}_{e}(Z_{p}\cap Q) \lesssim 1.$ Then 
$$\operatorname{vis}(Z_{p} \cap Q)^{n/(n-1)} \leq C \, {\rm { deg }}\;  p.$$

\end{lemma}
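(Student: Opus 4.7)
The plan is to show that the convex body $K(Z_{p}\cap Q)$ has volume at least $\gtrsim (\deg p)^{-(n-1)}$, which immediately gives $\operatorname{vis}(Z_{p}\cap Q)^{n/(n-1)} \leq C\deg p$ via the definition of visibility. The proof has two ingredients: a universal lower bound on $K(Z_p\cap Q)$ coming from Guth's cylinder estimate, and an "extra" direction of size $\sim 1$ coming from the hypothesis.

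First I would note that since $Q$ has diameter $\sqrt{n}$, it is contained in a $1$-tube of \emph{any} orientation (possibly after a fixed dilation, or by using an $O(1)$-covering by $1$-tubes). Hence Lemma~\ref{cylinder} yields $\operatorname{surf}_{f}(Z_p\cap Q)\leq C\deg p$ for \emph{every} unit vector $f$. Unpacking the definition \eqref{Kdefn}, this means $f/(C\deg p)\in K(Z_p\cap Q)$ for all unit vectors $f$, i.e.\
$$K(Z_{p}\cap Q)\supseteq r\mathbb{B}, \qquad r:=(C\deg p)^{-1}.$$
Next I would use the hypothesis. Since $\operatorname{surf}_{e}(Z_{p}\cap Q)\lesssim 1$, the definition of $K$ gives $\pm c\,e\in K(Z_{p}\cap Q)$ for some $c\sim 1$. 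So $K(Z_p\cap Q)$ contains both the small ball $r\mathbb{B}$ and a segment of length $\sim 1$ in the direction $e$.

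Now I would invoke John's theorem as indicated in the paper. Let $E$ be the John ellipsoid of $K:=K(Z_{p}\cap Q)$, so $E\subseteq K\subseteq\sqrt{n}\,E$. From the inclusion $r\mathbb{B}\subseteq K\subseteq\sqrt{n}\,E$ we get $E\supseteq (r/\sqrt{n})\mathbb{B}$, so every semi-axis of $E$ is at least $r/\sqrt{n}\sim (\deg p)^{-1}$. From $c\,e\in K\subseteq\sqrt{n}\,E$ we get that $E$ contains a point of norm at least $c/\sqrt{n}\sim 1$, so the longest semi-axis of $E$ is $\geq c/\sqrt{n}\sim 1$. Labelling the semi-axes $a_1\geq a_2\geq\dots\geq a_n$, this gives $a_1\gtrsim 1$ and $a_i\gtrsim (\deg p)^{-1}$ for $i\geq 2$, hence
$$\operatorname{vol}(K)\;\geq\;\operatorname{vol}(E)\;=\;\omega_n\prod_{i=1}^n a_i\;\gtrsim\;(\deg p)^{-(n-1)}.$$
Finally, applying the definition $\operatorname{vis}(Z_{p}\cap Q)=(\operatorname{vol} K)^{-1/n}$ yields $\operatorname{vis}(Z_{p}\cap Q)\lesssim(\deg p)^{(n-1)/n}$, which rearranges to the desired bound.

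The only place requiring care is the first step: one must be sure that the cylinder estimate can be applied in every direction despite the cube not being a $1$-tube. This is a routine geometric observation (either via an $O(1)$-covering of $Q$ by $1$-tubes, or by inflating the radius by a dimensional constant in Lemma~\ref{cylinder}, which is proved by integrating the pointwise count "$\#(\ell\cap Z_p)\leq\deg p$" over a cross-section of bounded measure). Beyond that, everything reduces to the two John inclusions applied to the two pieces of information about $K$.
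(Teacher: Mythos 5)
Your proof is correct and uses essentially the same ingredients as the paper's: the hypothesis placing a point of norm $\sim 1$ in $K(Z_p\cap Q)$, the John ellipsoid, and Guth's cylinder estimate applied to $Q$ (with the same routine $O(1)$-covering of the cube by tubes, which the paper also relies on). The only difference is the arrangement — the paper bounds the shortest semi-axis of the John ellipsoid and applies the cylinder estimate in that single direction, whereas you apply it in all directions to place a ball of radius $\sim (\deg p)^{-1}$ inside $K$ and bound $\operatorname{vol} K$ directly — which is just the contrapositive of the same argument.
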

\begin{proof}
    Let $E$ be the John ellipsoid associated to $K(Z_{p}\cap Q)$ and let
    $l_1\geq l_2\geq \dots \geq l_n$ be the lengths of the principal
    axes of $E$. Let $A = \operatorname{vis}(Z_{p} \cap Q)$. By
    hypothesis and the fact that $K(Z_{p}\cap Q)\subseteq n^{1/2} E$, we have $l_1\gtrsim 1$.
    Moreover we have $(l_1\dots l_n)^{-1/n}\sim A$, so $l_2\dots l_n \lesssim A^{-n}$
    and therefore $l_n\lesssim A^{-n/(n-1)}$. So if $e_n$ is the direction with which $l_n$ is associated,
    we have that $\operatorname{surf}_{e_n} (Z_{p}\cap Q)\gtrsim A^{n/(n-1)}$.
    The cylinder estimate now gives $\operatorname{deg}p \gtrsim A^{n/(n-1)}$. 
\end{proof}

\medskip
\noindent
In order to deal with a continuity issue later in the argument (in Lemma \ref{convergence} of 
Section \ref{Antipodes}), we need (as does Guth) to define variants of 
the directional surface area and visibility which are continuous functionals of 
$Z = Z_p$ when the polynomial $p$ is allowed to vary. In view of the fact that the class
of polynomials with the desired properties for Theorem \ref{algtop0} is invariant under 
multiplication by non-zero scalars, it is natural to consider the unit sphere of the class
$\mathcal{P}_k$ of polynomials of degree at most $k$ in $n$ real variables. 
Indeed, $\mathcal{P}_k$ is a vector space of dimension $\sim k^n$, and so its unit sphere 
$\mathcal{P}_k^\ast$ is homeomorphic to $\mathbb{S}^N$ where $N = N(k) \sim k^n$. So with 
$k$ fixed, we allow $p$ to vary within  $\mathcal{P}_k^\ast$.\footnote{From now on we shall use 
the notations $\mathcal{P}_k^\ast$ and $\mathbb{S}^N$ where $N= N(k)$ interchangeably, the former 
when we are thinking of individual polynomials, the latter when continuity and topological 
considerations are foremost.} 
The continuity property needed
is most simply achieved by replacing $\mbox{surf}_e(Z)$ for surfaces of the form
$Z=Z_p\cap U$ (where $p\in \mathbb{S}^N$ and $U$ is open in $\mathbb{R}^n$) by
$\operatorname{surf}_{e,\varepsilon}(Z)$ which we define as the average of
$\mbox{surf}_e(Z')$ with $Z'=Z_{p'}\cap U$ over $p'$
in a ball of radius $\varepsilon$ centred at $p$ in $\mathbb{S}^N$.
From this we define $K_\varepsilon(Z)$ and
$\operatorname{vis}_\varepsilon(Z)$ in analogy to $K(Z)$ and $\operatorname{vis}(Z)$.
In the argument we will have to choose $\varepsilon$ sufficiently small so that these
entities behave in certain ways similarly to the unmollified versions.

\medskip
\noindent
It is a routine matter to verify that $K_\varepsilon(Z)$ is convex and that the
three lemmas of this section hold with these mollified variants. To be precise, 
fixing $k$ and the associated definitions of $\operatorname{surf}_{e,\varepsilon}(Z)$,
$K_\varepsilon(Z)$ and $\operatorname{vis}_\varepsilon(Z)$ as above
for $\mathcal{P}_k^\ast$, we have, (with implicit constants independent of $\varepsilon > 0$):

\begin{lemma}\label{cylinder-moll} 
If $T$ is a $1$-tube in $\mathbb{R}^n$ and $Z = \{x \; : \; p(x) = 0 \}$ is the zero hypersurface 
of a polynomial $ p \in \mathcal{P}_k^\ast$, then for all $\varepsilon>0$,
$$ {\rm{ surf}}_{e(T), \varepsilon}(Z \cap T) \leq C k.$$
\end{lemma}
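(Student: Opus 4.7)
The plan is to reduce directly to the unmollified Lemma \ref{cylinder} by exploiting the fact that the mollification procedure keeps us inside the class $\mathcal{P}_k$. By definition, $\operatorname{surf}_{e(T),\varepsilon}(Z\cap T)$ is the average of $\operatorname{surf}_{e(T)}(Z_{p'}\cap T)$ as $p'$ ranges over the ball of radius $\varepsilon$ around $p$ in $\mathbb{S}^N=\mathcal{P}_k^\ast$. The critical observation is that every such $p'$, being an element of $\mathcal{P}_k^\ast$, is a non-zero polynomial of degree at most $k$, and the constant $C$ in Lemma \ref{cylinder} depends only on the dimension $n$.

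First, I would observe that for each $p'\in B(p,\varepsilon)\cap\mathbb{S}^N$, the unmollified cylinder estimate yields the pointwise bound
$$\operatorname{surf}_{e(T)}(Z_{p'}\cap T)\leq Ck,$$
uniformly in $p'$. This uses only that $\deg p'\leq k$ and that the ambient set $T$ appearing in the intersection is a $1$-tube; no further regularity of $p'$ is needed because the set of critical points of a non-zero polynomial has $\mathcal{H}_{n-1}$-measure zero on its zero set, so the normal $n(x)$ is defined almost everywhere.

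Second, I would simply integrate this uniform bound against the averaging measure on $B(p,\varepsilon)\cap\mathbb{S}^N$. Since averaging a nonnegative quantity bounded by a constant produces a result bounded by the same constant, we obtain
$$\operatorname{surf}_{e(T),\varepsilon}(Z\cap T)\leq Ck,$$
with the constant independent of $\varepsilon>0$, as required.

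There is essentially no obstacle here: the entire point of performing the mollification on $\mathbb{S}^N=\mathcal{P}_k^\ast$ (rather than on the full space $\mathcal{P}_k$, where one would leave the degree-$k$ class) is precisely to guarantee that the degree bound is preserved under mollification, so that the uniform estimate passes through trivially. The content of the lemma is thus a confirmation that the mollified framework of this section inherits the cylinder estimate at no cost in constants; the genuine geometric work is in Lemma \ref{cylinder} itself.
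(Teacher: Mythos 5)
Your proposal is correct and coincides with the argument intended in the paper, which dismisses the mollified lemmas as routine consequences of their unmollified counterparts: since every $p'$ in the $\varepsilon$-ball in $\mathcal{P}_k^\ast$ still has degree at most $k$, Lemma \ref{cylinder} applies uniformly and averaging preserves the bound $Ck$ independently of $\varepsilon$.
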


\begin{lemma}\label{linalg-moll}
Suppose that $p \in \mathcal{P}_k^\ast$ and that $Z=Z_p\cap U$ as above. 
Also suppose that for some $\varepsilon>0$ and
all unit vectors $e \in \mathbb{R}^n$ we have 
$ 1 \lesssim {\rm{ surf}}_{e, \varepsilon} (Z) \lesssim D$.
If $v_1, \dots , v_d,$ $( 1 \leq d\leq n )$ are unit vectors, then,
$$ \left(v_1 \wedge \dots \wedge v_d\right)^{1/n} {\rm vis_{\varepsilon} }(Z) 
\leq C D^{(n-d)/n} \left({\rm surf }_{v_1, \varepsilon}(Z) \dots \, {\rm surf }_{v_d, \varepsilon}(Z)\right)^{1/n}.$$
\end{lemma}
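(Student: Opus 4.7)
The plan is to follow the proof of Lemma \ref{linalg} essentially verbatim, with each occurrence of $\mbox{surf}_e$ and $K(Z)$ replaced by the mollified counterparts $\mbox{surf}_{e,\varepsilon}$ and $K_\varepsilon(Z)$. This works because the three structural ingredients used in the unmollified argument, namely the convexity and central symmetry of $K_\varepsilon(Z)$, its containment in $\mathbb{B}$, and the pointwise bounds $1 \lesssim \mbox{surf}_{e,\varepsilon}(Z) \lesssim D$, are all available with implicit constants independent of $\varepsilon$, as noted in the surrounding discussion.

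Concretely, first I would assume that $\{v_1,\dots,v_d\}$ is linearly independent (otherwise the left-hand side vanishes and there is nothing to prove) and extend it to a basis $\{v_1,\dots,v_n\}$ of $\mathbb{R}^n$ in which $v_{d+1},\dots,v_n$ are orthonormal and orthogonal to $\mbox{span}\{v_1,\dots,v_d\}$. This choice yields the exterior-algebra identity $v_1\wedge\cdots\wedge v_n = v_1\wedge\cdots\wedge v_d$. Second, using the lower bound $\mbox{surf}_{v_j,\varepsilon}(Z) \gtrsim 1$ for each unit vector $v_j$, I would pick a dimensional constant $c$ small enough so that, for every $j$, the point $\pm c v_j/\mbox{surf}_{v_j,\varepsilon}(Z)$ lies both in $\mathbb{B}$ and, by the mollified analogue of the definition \eqref{Kdefn}, in $K_\varepsilon(Z)$.

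Third, the convexity and symmetry of $K_\varepsilon(Z)$ then imply that the symmetric convex hull of these $2n$ points lies inside $K_\varepsilon(Z)$, whence
$$\mbox{vol }K_\varepsilon(Z) \gtrsim v_1\wedge\cdots\wedge v_n \prod_{j=1}^n \mbox{surf}_{v_j,\varepsilon}(Z)^{-1}.$$
Invoking the upper bound $\mbox{surf}_{v_j,\varepsilon}(Z) \lesssim D$ for $j = d+1,\dots,n$ together with the identity from the first step, this becomes
$$\mbox{vol }K_\varepsilon(Z) \gtrsim D^{-(n-d)} \, v_1\wedge\cdots\wedge v_d \prod_{j=1}^d \mbox{surf}_{v_j,\varepsilon}(Z)^{-1},$$
and raising both sides to the power $-1/n$ yields the claimed inequality.

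The only point that requires genuine attention is the uniformity in $\varepsilon$. Because $\mbox{surf}_{e,\varepsilon}$ is defined as an average of unmollified directional surface areas over a ball in $\mathcal{P}_k^\ast$, both the convexity of $K_\varepsilon(Z)$ and the monotone relation between $u\in K_\varepsilon(Z)$ and the $\varepsilon$-averaged surface functional are preserved, so no $\varepsilon$-dependent constants are introduced. Beyond this bookkeeping there is no real obstacle; the argument is, as the paper indicates, a routine adaptation of Lemma \ref{linalg}.
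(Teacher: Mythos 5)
Your proposal is correct and matches the paper's intent: the paper offers no separate argument for Lemma \ref{linalg-moll}, simply remarking that it is "a routine matter to verify" that the proof of Lemma \ref{linalg} goes through with the mollified quantities, which is precisely the adaptation you carry out (with the right checks: convexity and symmetry of $K_\varepsilon(Z)$, the containment in $\mathbb{B}$ via the lower bound $\operatorname{surf}_{v_j,\varepsilon}(Z)\gtrsim 1$, and constants uniform in $\varepsilon$).
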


\begin{lemma}\label{three-moll}
Suppose $p \in \mathcal{P}_k^\ast$ is such that for some $\varepsilon >0$ and some unit vector $e$, 
$\operatorname{surf}_{e, \varepsilon}(Z_{p}\cap Q) \lesssim 1.$ Then 
$$\operatorname{vis}_\varepsilon(Z_{p} \cap Q)^{n/(n-1)}\leq Ck.$$
\end{lemma}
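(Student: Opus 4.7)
The plan is to run the proof of Lemma \ref{three} essentially verbatim, with all quantities replaced by their mollified analogues, relying on the cylinder estimate in its mollified form (Lemma \ref{cylinder-moll}) and on the convexity of $K_\varepsilon(Z)$ noted in the paragraph preceding the statement. Since John's inclusion $E \subseteq K \subseteq n^{1/2}E$ is a purely convex-geometric fact about symmetric convex bodies, it applies to $K_\varepsilon(Z_p\cap Q)$ unchanged.

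First I would set $A := \operatorname{vis}_\varepsilon(Z_{p} \cap Q)$, let $E$ denote the John ellipsoid of $K_\varepsilon(Z_{p} \cap Q)$, and let $l_1 \geq l_2 \geq \dots \geq l_n$ be its principal semi-axes, with $e_n$ the unit direction associated to $l_n$. From the mollified form of the definition \eqref{Kdefn}, the hypothesis $\operatorname{surf}_{e,\varepsilon}(Z_p\cap Q) \lesssim 1$ shows that $ce \in K_\varepsilon(Z_p\cap Q)$ for a dimensional constant $c > 0$; combined with $K_\varepsilon(Z_p\cap Q) \subseteq n^{1/2}E$, this forces $l_1 \gtrsim 1$. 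Since $\operatorname{vol}(E) \sim \operatorname{vol}(K_\varepsilon(Z_p\cap Q))$ (with constants depending only on $n$, by John's inclusion again), we have $(l_1 \cdots l_n)^{-1/n} \sim A$; combined with $l_1 \gtrsim 1$ this yields $l_2 \cdots l_n \lesssim A^{-n}$, and by monotonicity of the $l_i$, $l_n \lesssim A^{-n/(n-1)}$.

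The same inclusion $K_\varepsilon(Z_p\cap Q) \subseteq n^{1/2}E$ shows that the largest $t \geq 0$ for which $t e_n \in K_\varepsilon(Z_p\cap Q)$ is at most $n^{1/2} l_n$; by the defining condition, this maximal $t$ is comparable to $1/\operatorname{surf}_{e_n,\varepsilon}(Z_p\cap Q)$, so $\operatorname{surf}_{e_n,\varepsilon}(Z_p\cap Q) \gtrsim 1/l_n \gtrsim A^{n/(n-1)}$. Applying Lemma \ref{cylinder-moll} to any $1$-tube $T$ of direction $e_n$ containing $Q$ then yields $A^{n/(n-1)} \lesssim k$, which is precisely the claimed bound.

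There is no real obstacle here: the step requiring a brief check, rather than a genuine difficulty, is confirming that averaging the polynomial $p'$ over an $\varepsilon$-ball in $\mathcal{P}_k^\ast$ preserves the convexity, symmetry and support-function interpretation that underpin the definition of $K_\varepsilon$ and its John ellipsoid. Once these are in hand, the argument is a cosmetic modification of the proof of Lemma \ref{three} already in the text, with $\operatorname{surf}$, $K$, $\operatorname{vis}$ replaced throughout by $\operatorname{surf}_{\cdot,\varepsilon}$, $K_\varepsilon$, $\operatorname{vis}_\varepsilon$.
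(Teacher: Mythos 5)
Your proof is correct and is essentially the paper's own argument: the paper proves Lemma \ref{three} exactly this way (John ellipsoid of $K$, $l_1\gtrsim 1$ from the hypothesis, $l_n\lesssim A^{-n/(n-1)}$, hence a large directional surface area beaten against the cylinder estimate) and then simply remarks that the three lemmas carry over verbatim to the mollified quantities once one checks $K_\varepsilon$ is a symmetric convex body, which is the same routine verification you identify. No substantive difference.
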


\medskip
\noindent
The reader may wish to proceed with the unmollified variants in mind 
on a first reading.

\section{Application of the main result to multilinear Kakeya}
\noindent
The version of Theorem \ref{algtop0} that we will actually need is:

\begin{theorem}\label{algtop}
Given a nonnegative function $M : \mathcal{Q} \to \mathbb{R}$,
there exists a nonnegative integer $k$, a polynomial $p \in \mathcal{P}_k^\ast$ and an $\varepsilon>0$ 
such that 
$$ k \leq C \left(\sum_Q M(Q)^n\right)^{1/n}$$
and such that for all $Q \in \mathcal{Q}$ 
$$ \operatorname{vis}_\varepsilon(Z_p \cap Q) \geq C M(Q).$$
\end{theorem}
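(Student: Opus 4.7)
\medskip
\noindent
\textbf{Proof sketch.}
The plan is to argue by contradiction using the Lusternik--Schnirelmann covering form of the Borsuk--Ulam theorem, following the ellipsoid strategy indicated in the introduction. First I would fix $k$ to be the smallest integer with $k \gtrsim \left(\sum_Q M(Q)^n\right)^{1/n}$, so that $\mathcal{P}_k^\ast$ is homeomorphic to $\mathbb{S}^N$ with $N \sim k^n \gtrsim \sum_Q M(Q)^n$. Call a polynomial $p \in \mathcal{P}_k^\ast$ \emph{bad} (for a small dimensional constant $c$, still to be chosen, and a mollification parameter $\varepsilon > 0$ to be fixed later) if there exists a cube $Q$ with $\operatorname{vis}_\varepsilon(Z_p \cap Q) < c M(Q)$. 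The goal is to show that the bad set does not exhaust $\mathcal{P}_k^\ast$.

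\medskip
\noindent
For each bad polynomial $p$, Lemma~\ref{three-moll} together with the John ellipsoid geometry of $K_\varepsilon(Z_p \cap Q)$ produces a distinguished ``short'' direction along which the directional surface area is small, and this direction gives rise to a canonical thin ellipsoid $E(p,Q)$ of volume $\sim M(Q)^{-n}$ whose translates essentially tessellate $Q$ by $\sim M(Q)^n$ copies. After a suitable discretization of the set of possible ellipsoid shapes and positions, this furnishes a finite family of ``test ellipsoids'' $\{E_\alpha\}_\alpha$, indexed over all cubes $Q$, of total cardinality $J \sim \sum_Q M(Q)^n$. To each such test ellipsoid $E_\alpha$ attach the two antipodal sets
\begin{equation*}
B_\alpha^\pm = \Bigl\{p \in \mathcal{P}_k^\ast : E_\alpha \text{ is selected for } p \text{ at some cube } Q, \text{ and } \pm \int_{E_\alpha} \operatorname{sign}(p) > 0 \Bigr\},
\end{equation*}
each of which is antipode-free because $B_\alpha^-$ is exactly the antipode of $B_\alpha^+$. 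The mollification is exactly what makes $K_\varepsilon$, and hence the selection of $E(p,Q)$, vary continuously in $p$, so that the closures of these sets remain antipode-free.

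\medskip
\noindent
If every polynomial were bad then the union of the $B_\alpha^\pm$ would cover $\mathcal{P}_k^\ast$ (away from the measure-zero locus where all the signed integrals vanish, which is handled by a limiting argument). But we have only $2J$ antipode-free closed sets, and the Lusternik--Schnirelmann theorem asserts that at least $N+2$ such sets are needed to cover $\mathbb{S}^N$; choosing the constant in $k \gtrsim (\sum_Q M(Q)^n)^{1/n}$ large enough makes $2J < N+2$, a contradiction. Therefore a polynomial $p$ exists which is bisected by every selected test ellipsoid, and an ellipsoidal analogue of the bisection lemma forces $Z_p$ to have directional surface area $\gtrsim M(Q)^n \cdot M(Q)^{-(n-1)} \cdot (\text{short axis factor})$ in the short direction of each $E_\alpha$; combining this with Lemma~\ref{linalg-moll} and a re-examination of the John ellipsoid dimensions forces $\operatorname{vis}_\varepsilon(Z_p \cap Q) \gtrsim M(Q)$ for every $Q$, as required.

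\medskip
\noindent
The main obstacle is the \emph{bookkeeping} of step two: one must cover the entire bad set by antipode-free pieces while keeping their total number on the order of $\sum_Q M(Q)^n$. The naive selection rule ``pick the cube where $p$ is most bad, then pick the John ellipsoid'' depends discontinuously on $p$, so an honest continuous (or at least closed) covering requires discretizing the space of candidate ellipsoid shapes and orientations to a finite family whose cardinality per cube is controlled by $M(Q)^n$ up to dimensional constants. It is precisely this continuity/closedness issue that motivates the use of the mollified variants $\operatorname{surf}_{e,\varepsilon}$, $\operatorname{vis}_\varepsilon$ introduced in Section~\ref{sect:vis}, and this is where I expect the technical heart of the proof to lie.
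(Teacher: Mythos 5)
Your overall strategy is the paper's: fix $k\sim(\sum_Q M(Q)^n)^{1/n}$, decompose the ``bad'' polynomials into antipode-free pieces built from ellipsoids adapted to $K_\varepsilon(Z_p\cap Q)$, and invoke the Lusternik--Schnirelmann covering form of Borsuk--Ulam to conclude that a good polynomial survives. But there are two genuine gaps, both at points you yourself flag or wave away. First, the bookkeeping: you propose to pre-discretize the ellipsoid \emph{shapes, orientations and positions} into a finite family of test ellipsoids of total cardinality $J\sim\sum_Q M(Q)^n$, indexing the antipode-free sets by the ellipsoids themselves. This count does not close: the bodies $K_\varepsilon(Z_p\cap Q)$ arising from bad $p$ can be arbitrarily eccentric, so a shape/orientation net at bounded Banach--Mazur distance has cardinality growing with $M(Q)$, and it \emph{multiplies} the $\sim M(Q)^n$ positions per cube, pushing $J$ above $N\sim k^n$. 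The paper's resolution is precisely not to index by the ellipsoid at all: the sets are indexed by the cube $Q$, a dyadic badness level $r$, a colour $\Theta$ from a \emph{bounded} palette (Lemma \ref{ellipsoid}), and a translate index $\alpha$ ranging over only $\sim 2^{-rn}M(Q)^n$ values, while the ellipsoid $E(p)$ itself is allowed to depend on $p$. The colour device is what makes ``the $\alpha$'th translate of $\eta E(p)$'' well defined (uniqueness of the close member of $\mathcal{E}_0^\Theta$) and, together with the mollification, what yields the closure-disjointness \eqref{qw} via Lemma \ref{convergence}. Your proposal acknowledges the discontinuity of the selection rule as ``the technical heart'' but does not supply this mechanism, and a straightforward finite discretization will not both keep the count at $\sum_Q M(Q)^n$ and keep the closures antipode-free.

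Second, the covering of the bad set is not a measure-zero issue. Your sets $B_\alpha^\pm$ only contain polynomials that \emph{fail} to bisect the relevant translate; to conclude that every bad polynomial lies in some $B_\alpha^\pm$ you must show that a polynomial whose zero set has visibility $\ll M(Q)$ in $Q$ cannot bisect \emph{all} the disjoint translates of (a shrunken copy of) its adapted ellipsoid. This is the quantitative counting step (the paper's Lemma \ref{manybisections}), which combines the bisection-implies-area bound of Lemma \ref{bisect} with the duality $l_j\,\operatorname{surf}_{e_j,\varepsilon}(Z_p\cap Q)\lesssim 1$ coming from the definition of $K_\varepsilon$, and it is exactly where the choice of the shrinking factor $\eta$ enters; it cannot be replaced by discarding ``the measure-zero locus where all the signed integrals vanish,'' since the covering lemma requires a genuine cover of $\mathbb{S}^N$. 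Relatedly, your final step is inverted: once the bad sets fail to cover, the surviving polynomial satisfies $\operatorname{vis}_\varepsilon(Z_p\cap Q)\gtrsim M(Q)$ for every $Q$ by the very definition of badness, and no further ``bisection $\Rightarrow$ visibility'' argument is needed (nor is one available, since nothing is ``selected'' for a good polynomial); that re-derivation belongs to the warm-up Proposition \ref{warmup}, not to Theorem \ref{algtop}.
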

\noindent
(Note that since it is always the case that $\operatorname{vis}_\varepsilon(Z_p \cap Q) \gtrsim 1$, the 
latter condition has content only for those $Q$ with $M(Q) \gtrsim 1$.)

\medskip
\noindent
In this section we show how this result implies the conditions of Proposition \ref{MK}, that is, 
given a finitely supported nonnegative function $M : \mathcal{Q} \to \mathbb{R}$ satisfying
$\sum_Q M(Q)^n = 1$, there exist nonnegative functions 
$S_j : \mathcal{Q} \times \mathcal{T}_j \to \mathbb{R}$ such that \eqref{need1} and \eqref{need2} hold.


\medskip
\noindent
Given such a finitely supported nonnegative function $M(Q)$ with $\sum_Q M(Q)^n = 1$, we define $M_0(Q) = \lambda M(Q)$ for 
some $\lambda \gg 1$ which is required to satisfy $\lambda \geq M(Q)^{-n}$ for all $Q$ in the support of $M$. 
Apply Theorem \ref{algtop} with data $M_0$ to obtain a $k$, a $p \in \mathcal{P}_k^\ast$ and an 
$\varepsilon > 0$ such that
\begin{equation}
    \label{deg} k \leq C \lambda
\end{equation}
and
\begin{equation}
    \label{wer}
    \operatorname{vis}_\varepsilon(Z_{p} \cap Q) \geq C \lambda M(Q).
\end{equation}


\medskip
\noindent
Using \eqref{deg}, \eqref{wer} and our requirement on $\lambda$, we have
\begin{eqnarray*}
\begin{aligned}
k &\leq C \lambda = C \lambda^{n/(n-1)} \lambda^{-1/(n-1)} \\
&\leq C \left(\frac{\operatorname{vis}_\varepsilon(Z_{p} \cap Q)}{M(Q)}\right)^{n/(n-1)}
\left(M(Q)^{-n}\right)^{-1/(n-1)}\\
&= C \operatorname{vis}_\varepsilon(Z_{p} \cap Q)^{n/(n-1)}.
\end{aligned}
\end{eqnarray*}

\medskip
\noindent
Using Lemma~\ref{three-moll} we deduce that for all cubes $Q$ in the support of $M$ and all unit vectors $e$ we have 
$\operatorname{surf}_{e,\varepsilon}(Z_{p}\cap Q) \gtrsim 1$.\footnote{Another way of achieving this is to 
multiply the polynomial which Theorem~\ref{algtop}
produces with a polynomial whose zero set consists of
hyperplanes parallel to the coordinate hyperplanes which pass through the cubes in the
support of $M$. This has an insignificant effect on the degree of the polynomial provided $\lambda$ is large enough.
However, some care must be taken when considering how this augmentation interacts with the mollification.} This in turn will permit us to apply Lemma~\ref{linalg-moll} (relating visibility to geometric means of directional surface areas) below.

\medskip
\noindent
We turn to the verification of \eqref{need2}. By Lemma~\ref{cylinder-moll} and \eqref{deg} we have, for all $e \in \mathbb{S}^{n-1}$,
\begin{equation}\label{awq}
{\rm surf }_{e,\varepsilon}(Z_p \cap Q) \leq C \lambda
\end{equation} 
and moreover
\begin{equation}\label{qwa}
\sum_{Q \, : \, Q \cap T_j \neq \emptyset} \mbox{ surf}_{e(T_j),\varepsilon}({Z_p} \cap Q)
\leq C \mbox{ surf}_{e(T_j),\varepsilon}({Z_p} \cap \tilde{T}_j) 
\leq C k \leq C \lambda , 
\end{equation}
(where $\tilde{T}$ denotes the expansion of a tube $T$ about its axis by a dimensional factor).

\medskip
\noindent
We now define 
$$S_j(Q, T_j) : = \lambda^{-1} \mbox{ surf}_{e(T_j),\varepsilon}({Z_p} \cap Q)$$ 
and observe that \eqref{qwa} immediately implies
$$ \sum_{Q \, : \, Q \cap T_j \neq \emptyset} S_j(Q, T_j) \leq C, $$
which establishes \eqref{need2}.

\medskip
\noindent
On the other hand, to see that \eqref{need1} is satisfied, note that 
\eqref{wer} and \eqref{awq} together with Lemma \ref{linalg-moll} give
$$ C \lambda M(Q) \leq {\rm vis_\varepsilon }(Z_p \cap Q) \leq 
\frac{C  \lambda^{(n-d)/n} \left({\rm surf }_{e(T_1),\varepsilon}(Z_p \cap Q) 
\dots \, {\rm surf }_{e(T_d),\varepsilon}(Z_p \cap Q)\right)^{1/n}}
{\left(e(T_1) \wedge \dots \wedge e(T_d)\right)^{1/n}},$$
and so

$$ S_1(Q,T_1) \dots S_d(Q,T_d) = \lambda^{-d}\prod_{j=1}^d \mbox{ surf}_{e(T_j),\varepsilon}({Z_p} \cap Q)$$
is at least
$$ C \lambda^{-d}\lambda^n M(Q)^n \lambda^{d-n} e(T_1)\wedge \dots \wedge e(T_d) 
= C M(Q)^n e(T_1)\wedge \dots \wedge e(T_d),$$
and thus \eqref{need1} is established.

\medskip
\noindent
Consequently, the multilinear Kakeya theorem is reduced to proving Theorem \ref{algtop}.

\section{The Borsuk--Ulam theorem and a covering lemma}\label{sec:BU}

\noindent
The Borsuk--Ulam theorem is as follows:

\begin{theorem}[Borsuk--Ulam]
Suppose that $N \geq J$ and that $F: \mathbb{S}^N \to \mathbb{R}^J$ is continuous and satisfies 
$F(-x) = - F(x)$ for all $x \in \mathbb{S}^N$. Then there is an $x \in \mathbb{S}^N$ such that 
$F(x) = 0$.
\end{theorem}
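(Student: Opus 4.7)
\medskip\noindent
The plan is to reduce first to the square case $N=J$ and then, by contradiction, to the non-existence of an odd map $\mathbb{S}^N \to \mathbb{S}^{N-1}$. For $N>J$ I would simply restrict $F$ to an equatorial $J$-sphere $\mathbb{S}^J \subseteq \mathbb{S}^N$ (for instance, the one obtained by setting the final $N-J$ coordinates equal to zero); the restriction is still continuous and odd, and any zero of the restriction is a zero of $F$, so the case $N=J$ implies the case $N>J$. It therefore suffices to treat the square case.

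\medskip\noindent
Assume $F: \mathbb{S}^N \to \mathbb{R}^N$ is continuous and odd, and suppose for contradiction that $F(x)\neq 0$ for every $x$. Then the normalisation $g(x) := F(x)/|F(x)|$ is a continuous odd map $\mathbb{S}^N \to \mathbb{S}^{N-1}$. Composing with the equatorial inclusion $i: \mathbb{S}^{N-1}\hookrightarrow \mathbb{S}^N$ produces a continuous odd self-map $h := i\circ g: \mathbb{S}^N \to \mathbb{S}^N$. Since $h$ omits both poles of $\mathbb{S}^N$, it is homotopic to a constant (via straight-line homotopy in the complement of the missed pole, projected back to $\mathbb{S}^N$) and hence has degree zero. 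On the other hand, the classical theorem of Borsuk on odd self-maps of the sphere asserts that any continuous odd map $\mathbb{S}^N\to\mathbb{S}^N$ has odd, and in particular nonzero, degree. This contradiction establishes the theorem.

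\medskip\noindent
The only genuinely topological input required is Borsuk's theorem on the degree of odd self-maps, and I expect this to be the main obstacle. The most analyst-friendly routes proceed either by first approximating $h$ by a smooth odd map (by averaging with its antipodal conjugate and mollifying) and then counting signed preimages of a regular value, using antipodal symmetry to pair them so that the total count is forced to be odd; or, alternatively, combinatorially via Tucker's lemma on antipodal triangulations of $\mathbb{S}^N$, which in fact delivers Borsuk--Ulam directly without introducing degree theory at all. Either approach suffices for the application in the present paper; for treatments accessible to the analyst see \cite{Mat} and \cite{C}.
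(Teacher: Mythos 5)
The paper does not actually prove this statement: Borsuk--Ulam is taken as the one external input of the whole argument, with the reader referred to \cite{Mat} and \cite{C} for proofs accessible to the analyst. So there is no internal proof to compare yours against, and the relevant question is whether your outline genuinely supplies one. Your reduction steps are fine and standard: restricting to an equatorial $\mathbb{S}^J$ handles $N>J$, and in the square case the normalised map $g=F/|F|$, composed with an equatorial inclusion, gives an odd self-map of $\mathbb{S}^N$ missing a point, hence nullhomotopic and of degree zero, contradicting Borsuk's theorem that odd self-maps have odd degree. But this last theorem is where all the content lives --- it is well known to be essentially equivalent to Borsuk--Ulam itself --- so the proposal as written is a reduction of the theorem to an equally deep statement, which in spirit is no more than what the paper does by citing \cite{Mat} and \cite{C}.

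Moreover, your one-sentence sketch of the parity argument for Borsuk's theorem does not work as stated: for a smooth odd map and a regular value $y$, antipodal symmetry identifies the fibre over $y$ with the fibre over $-y$, not with itself, so it does not immediately force the number of preimages of $y$ to be odd. The genuine smooth proofs require more (for instance an induction on dimension comparing the map on the equator with the map on a hemisphere, or a homological transfer argument). Your alternative suggestion --- Tucker's lemma on antipodally symmetric triangulations, which yields Borsuk--Ulam directly without degree theory --- is the honest elementary route, and is exactly the kind of treatment found in \cite{Mat}; if you want a self-contained analyst's proof in the spirit of this paper, either develop that combinatorial argument in full or follow the Stokes-theorem proof of \cite{C}, rather than leaning on the odd-degree theorem as a black box.
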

\noindent

\medskip
\noindent
For a delightful discussion of this theorem and its applications, see \cite{Mat}. See also \cite{C}
for a recent proof of the Borsuk--Ulam theorem using only point-set topology and Stokes' theorem. 
Included in \cite{Mat} there is a discussion of various equivalent forms of this theorem, some of which 
(known as Lusternik--Schnirelmann results) take the form of covering statements for 
the sphere. In this section we formulate another such equivalent covering statement which we shall use 
in our proof of Theorem \ref{algtop}.

\begin{lemma}\label{covering}
Suppose that $A_i \subseteq \mathbb{S}^N$ for $ 1 \leq i \leq J$, and suppose that for each $i$,
$A_i \cap (\overline{-A_i}) = \emptyset$. 
If $J \leq N$, then the $2J$ sets $A_i$ and $-A_i$ do not cover $\mathbb{S}^N$. 
\end{lemma}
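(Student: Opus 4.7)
The plan is to argue by contradiction, assuming that the $2J$ sets $A_i$ and $-A_i$ do cover $\mathbb{S}^N$, and then manufacture a continuous odd map $F : \mathbb{S}^N \to \mathbb{R}^J$ which does not vanish anywhere. Since $J \leq N$, this will directly contradict the Borsuk--Ulam theorem.

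To construct the $i$-th coordinate $f_i$ of $F$, I would exploit precisely the hypothesis $A_i \cap \overline{(-A_i)} = \emptyset$, which is what gives genuine separation between $A_i$ and the closure of its antipode. The natural candidate is
\begin{equation*}
  f_i(x) \;=\; \operatorname{dist}\!\bigl(x,\overline{-A_i}\bigr) \;-\; \operatorname{dist}\!\bigl(x,\overline{A_i}\bigr).
\end{equation*}
Each $f_i$ is visibly continuous, and the symmetric roles of $\overline{A_i}$ and $\overline{-A_i}$ (swapped by $x \mapsto -x$) give $f_i(-x) = -f_i(x)$, so $F(x) = (f_1(x),\dots,f_J(x))$ is continuous and odd. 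For $x \in A_i$ one has $\operatorname{dist}(x,\overline{A_i}) = 0$ while the disjointness hypothesis forces $\operatorname{dist}(x,\overline{-A_i}) > 0$, so $f_i(x) > 0$; by oddness, $f_i(x) < 0$ whenever $x \in -A_i$.

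Now if the $2J$ sets $\{A_i\} \cup \{-A_i\}$ covered $\mathbb{S}^N$, then for every $x \in \mathbb{S}^N$ some coordinate $f_i(x)$ would be strictly nonzero, and hence $F(x) \neq 0$ throughout $\mathbb{S}^N$. Since $N \geq J$, this contradicts Borsuk--Ulam applied to the continuous odd map $F : \mathbb{S}^N \to \mathbb{R}^J$, establishing the lemma.

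The only subtle point is the choice to use $\operatorname{dist}(x, \overline{-A_i})$ rather than $\operatorname{dist}(x, -A_i)$: distance to a non-closed set can vanish on the closure, so without taking closures one loses the strict positivity of $f_i$ on $A_i$. The hypothesis is stated exactly in the form $A_i \cap \overline{-A_i} = \emptyset$ in order to make this separation survive passage to closures, so this is not really an obstacle but rather the reason the hypothesis is phrased the way it is. Everything else is routine.
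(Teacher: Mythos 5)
Your proposal is correct and is essentially the paper's own argument: both build the odd continuous map whose $i$-th coordinate is the difference of distances to the antipodal set and the set, and apply Borsuk--Ulam with $J \leq N$; the paper phrases it directly (the zero of $F$ lies in no $A_i$ or $-A_i$) while you phrase it contrapositively, which is the same proof. Your closing remark about closures is a non-issue rather than a subtlety, since $\operatorname{dist}(x,S)=\operatorname{dist}(x,\overline{S})$ for any set $S$, so the paper's choice of $d(x,-A_i)$ gives exactly the same function; what matters, as you correctly note, is that the hypothesis $A_i \cap \overline{-A_i} = \emptyset$ yields strict positivity on $A_i$.
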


\medskip
\noindent
Note that no topological hypothesis on the sets $A_i$ is needed.
 
\begin{proof}
Let $F_i(x) = d(x, -A_i) - d(x, A_i)$ for $1 \leq i \leq J$. Then, with $F: \mathbb{S}^N \to \mathbb{R}^J$
defined by $F(x) = (F_1(x), \dots , F_J(x))$, we have  that $F$ is continuous and $F(-x) = -F(x)$ for all $x$, 
so by the Borsuk--Ulam theorem there is an $x$ with $F(x) = 0$. We claim that this $x$ does not belong 
to any $A_i$ or to $-A_i$. For if $x \in A_i$ we have
$d(x, A_i) = 0$, hence $d(x, -A_i) = 0$, hence $x \in
\overline{-A_i}$, a contradiction. Likewise, since by hypothesis $
\overline{A_i} \cap (-A_i) = \emptyset$, 
$x \in -A_i$ gives  $x \in \overline{A_i}$, another contradiction.
\end{proof}

\medskip
\noindent
{\bf{Remark.}} The converse argument also holds: if we assume the assertion of the lemma, 
{\em{but only for open sets}} $U_i$, we can recover the Borsuk--Ulam theorem. Indeed, suppose 
$F: \mathbb{S}^N \rightarrow \mathbb{R}^J$ is continuous, $F(-x) = -F(x)$ and $N \geq J$. Let
$U_i$ be the open set $\{x \; : \; F_i(x) > 0\}$. Then $-U_i = \{x \; : \; F_i(x) < 0\}$, so that
$\overline{- U_i} \subseteq \{x \, : \, F_i(x) \leq 0\}$ and so
$U_i \cap (\overline{-U_i}) = \emptyset$.
By assumption there is an $x$ which is not in any of the $U_i$ or $-U_i$. So $F_i(x) \leq 0$ for 
all $i$ and $F_i(x) \geq 0$ for all $i$. Hence $F_i(x) = 0$ for all $i$, that is, $F(x) = 0$.\footnote{That
the assertion of Lemma \ref{covering} for open sets $U_i$ logically implies the same 
statement for sets $A_i$ with no topological restrictions can easily be seen directly from the fact that the 
metric space $\mathbb{S}^N$ satisfies appropriate separation axioms. More precisely, if two subsets $A$ and $B$ 
of a metric space are separated in the sense that the closure of either does not meet the other, then there are 
open sets $U$ and $V$ with  the same property such that $A \subseteq U$ and $B \subseteq V$.}

\section{Outline of the proof of Theorem \ref{algtop}} 
\label{sec:outline}

\noindent
We now describe the scheme of the proof of Theorem \ref{algtop}. The function $M$ is given, 
and we will be working with the class $\mathcal{P}_k^\ast = \mathbb{S}^N$
of normalised polynomials 
$p : \mathbb{R}^n \to \mathbb{R}$ of degree bounded by some $k \in \mathbb{N}$. Recall that $N \sim k^n$.
For each such polynomial $p$, its zero set is the algebraic hypersurface 
$Z_p = \{x \, : \, p(x) = 0 \}$, 
and we let 
$$S(Q) = \{ p \in \mathcal{P}_k^\ast \; : \; \operatorname{vis}_\varepsilon(Z_p \cap Q) \leq M(Q)\}.$$  
Following Guth \cite{G}, the aim is to show that if we take a suitable 
$k \sim (\sum_Q M(Q)^n)^{1/n}$, and a suitable $\varepsilon >0$, then we can find a 
polynomial in $\mathcal{P}_k^\ast$ which is not 
in any of the $S(Q)$. (Note that $S(Q) = \emptyset$ 
for those $Q$ such that $M(Q) \lesssim 1$.) Our method to 
establish this diverges somewhat from that of Guth, but there are of course many points of contact 
between the two lines of argument.

\medskip
\noindent
Let, for $r \geq 0$,
$$S^{(r)}(Q) = \{ p \in \mathcal{P}_k^\ast \; : \; \operatorname{vis}_\varepsilon(Z_p \cap Q) \sim 2^{-r} M(Q)\}.$$ 
Then 
$$ S(Q) = \bigcup_{1 \lesssim 2^r \lesssim M(Q)}S^{(r)}(Q)$$ 
since $S^{(r)}(Q) = \emptyset$ for $r$ such that $2^r \gtrsim M(Q)$.
 
\medskip
\noindent
We shall introduce a collection $\mathcal{C}$ of ``colours'' $\Theta$ whose cardinality is bounded by $C$. 
For each colour $\Theta$ we shall define subsets $S^{(r), \Theta}(Q)$ of $S^{(r)}(Q)$
which have the property that
\begin{equation}\label{colours}
S^{(r)}(Q) = \bigcup_{\Theta \in \mathcal{C}} S^{(r), \Theta}(Q).
\end{equation}

\medskip
\noindent
For each fixed $Q$ and $r$ such that $1 \lesssim 2^r \lesssim M(Q)$ we will 
define an indexing set $\mathcal{A}_{Q, r}$ of cardinality
$C 2^{-rn} M(Q)^n$, and for each $\alpha \in \mathcal{A}_{Q, r}$, 
a subset $S^{(r), \Theta}_\alpha (Q)$ of $S^{(r), \Theta}(Q)$ such that 
\begin{equation}\label{translates}
S^{(r), \Theta}(Q) = \bigcup_{\alpha \in \mathcal{A}_{Q, r}}
S^{(r), \Theta}_\alpha (Q).
\end{equation}
To ensure that this decomposition is well-defined, it will transpire that $\varepsilon$ must be taken to be small.


\medskip
\noindent
Finally we shall decompose each $S^{(r), \Theta}_\alpha (Q)$ as
$$ S^{(r), \Theta}_\alpha (Q) = S^{(r), \Theta + }_\alpha (Q) \cup S^{(r), \Theta - }_\alpha (Q),$$
where 
\begin{equation}\label{opposite}
S^{(r), \Theta -}_\alpha(Q) = -  S^{(r), \Theta +}_\alpha(Q)
\end{equation} 
in such a way that for all $Q, r, \Theta$ and $\alpha$,
\begin{equation}\label{qw}
S^{(r), \Theta + }_\alpha (Q) \cap \overline{S^{(r), \Theta - }_\alpha (Q)} = \emptyset.
\end{equation}

\medskip
\noindent
(The closure here refers to the natural topology of $\mathcal{P}_k^\ast = \mathbb{S}^N$.)

\medskip
\noindent
The reason for the introduction of colours is to ensure that
there is sufficient separation between the sets $S^{(r), \Theta \pm
}_\alpha (Q)$ and their antipodes for \eqref{qw} to hold.

\medskip
\noindent
In summary then, 
\begin{equation}\label{union}
\bigcup_Q S(Q) = \bigcup_Q \bigcup_{1 \lesssim 2^r \lesssim M(Q)}\bigcup_{\Theta \in \mathcal{C}} 
\bigcup_{\alpha \in \mathcal{A}_{Q, r}} \left(S^{(r), \Theta + }_\alpha (Q) 
\cup S^{(r), \Theta - }_\alpha (Q)\right),
\end{equation}
where $S^{(r), \Theta + }_\alpha (Q)$ and $S^{(r), \Theta - }_\alpha (Q)$
satisfy \eqref{opposite} and \eqref{qw}. 

\medskip
\noindent
Lemma \ref{covering} then implies that if the cardinality of the set indexing the union on the 
right hand side of \eqref{union} is less than or equal to $N$,
then the sets in the union cannot cover $\mathbb{S}^N = \mathcal{P}_k^\ast$. 

\medskip
\noindent
Now the number of terms indexing the union is at most
$$ C \sum_Q \sum_{r \geq 0} \sum_{\Theta \in \mathcal{C}} \, \sum_{\alpha \in \mathcal{A}_{Q, r}} 
1 \leq C \sum_Q \sum_{r \geq 0} 2^{-rn} M(Q)^n \leq C \sum_Q M(Q)^n.$$
So provided that $N \gtrsim \sum_Q M(Q)^n$, amongst the polynomials in $\mathbb{S}^N =
\mathcal{P}_k^\ast$, there will exist one which is not in any of the $S(Q)$. Since 
$N \sim k^n$, we can therefore take $k$ with $k \sim \left(\sum_Q M(Q)^n \right)^{1/n}$
and a $p \in \mathcal{P}_k^\ast$ which, for suitable $\varepsilon > 0$, satisfies 
$\operatorname{vis}_\varepsilon(Z_p \cap Q) > M(Q)$ for all $Q$, as was needed.

\medskip
\noindent
It remains now to define the various decompositions introduced above,
and establish the assertions we have made concerning them.

\section{Colours}
\noindent
In this section we describe how to establish \eqref{colours} in such a way that the indexing set 
$\mathcal{C}$ has cardinality at most $C$.

\medskip
\noindent
Let $\mathcal{E}$ denote the class of centred ellipsoids in $\mathbb{R}^n$, 
that is images of the unit ball $\mathbb{B}$ 
by affine linear maps $A$. Each ellipsoid $A (\mathbb{B})$ is determined by an orthonormal basis of 
principal axes or directions given by the normalised eigenvectors of $A^tA$, and corresponding semiaxes, 
the squares of whose lengths are the eigenvalues of $A^tA$. 
Thus $\mathcal{E}$ is a manifold of dimension
$n(n-1)/2 + n = n(n+1)/2$. 

\medskip
\noindent
Let $\mathcal{K}$ denote the class of centrally symmetric convex bodies in $\mathbb{R}^n$.
By the John ellipsoid theorem \cite{John}, every member $K$ of $\mathcal{K}$ is close to some ellipsoid $E$ 
in the sense that $n^{-1/4} E \subseteq K \subseteq n^{1/4} E$. 

\medskip
\noindent
There is a natural metric (the Banach--Mazur metric) to put on the class $\mathcal{K}$, given by
$$d(K,L) = \log \inf \{ \alpha \geq 1 \, : \, \alpha^{-1}K \subseteq L \subseteq \alpha K\}.$$    
The John ellipsoid theorem asserts that every convex body is distant at most 
$(\log n)/4 \lesssim 1$  from some ellipsoid. An ellipsoid with semiaxes of lengths $2^{k_1}, \dots , 2^{k_n}$ where
$k_1 + \dots + k_n =0$ will be distant $ \lesssim \max |k_j| $ from the unit ball. Two congruent ellipses 
in $\mathbb{R}^2$ with semiaxes of lengths $1$ and $N$ and principal directions differing by $\theta$ will be 
distant $\lesssim \theta N $ apart.

\medskip
\noindent
To set the scene for the covering property of ellipsoids that we need, note that in $\mathbb{R}^N$, 
given a scale $\rho > 0$ and a pre-assigned number $\gamma > 1$, 
we can find a set $\mathcal{X}$ of $\rho$-separated points $x_i \in \mathbb{R}^N$ such that every 
point of $\mathbb{R}^N$ is in some $B(x_i,\rho)$, and such that $\mathcal{X}$ can be partitioned 
into $O_N(\gamma^N)$ families (colours), so that points of $\mathcal{X}$ of the same colour are 
distant at least $ \gamma \rho$ from each other. This property expresses the idea that the 
dimensionality of $\mathbb{R}^N$ as a metric space is $N$. We can then assign to each $x \in \mathbb{R}^N$ 
one or more colours according to whether $d(x, x_i) < \rho$ for some $x_i \in \mathcal{X}$ of that 
particular colour. 

\medskip
\noindent
Similarly it is not hard to verify that given $\rho >0$ and $\gamma >1$, there exists a $\rho$-separated 
subset $\mathcal{E}_0$ of $\mathcal{E}$ such that $\{B(E, \rho) \, : \, E \in  \mathcal{E}_0\}$ 
covers $\mathcal{E}$ and such that we can partition $\mathcal{E}_0$ into at most 
$O_n( \gamma^{n(n+1)/2})$ families (colours) such that any two ellipsoids in $\mathcal{E}_0$ 
of the same colour are distant at least $\gamma \rho$ from each other.  

\medskip
\noindent
Choosing $\rho = 1$ and $\gamma$ sufficiently large depending only on the dimension $n$, and using the 
John ellipsoid theorem, we obtain the following: 

\begin{lemma}\label{ellipsoid}
Supose $\alpha_n > 1 $ is sufficiently large. 
Then there exists a set $\mathcal{E}_0 \subseteq \mathcal{E}$ with the property that for every 
$K \in \mathcal{K}$ there is an $E \in \mathcal{E}_0$ such that
\begin{equation}\label{close}
\alpha_n^{-1} K \subseteq E \subseteq \alpha_n  K
\end{equation}
and such that the set $\mathcal{E}_0$ can be partitioned into at most $C = C(n, \alpha_n)$
colours in such a way that every $K \in \mathcal{K}$ satisfies \eqref{close} for at most one 
$E \in \mathcal{E}_0$ {\em of a given colour}.
\end{lemma}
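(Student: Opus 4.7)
The plan is to invoke directly the packing-and-colouring property of the space $(\mathcal{E},d)$ stated in the paragraph immediately preceding the lemma, applied with $\rho = 1$ and a parameter $\gamma > 1$ that I will choose depending only on $\alpha_n$. This will produce a $\rho$-separated set $\mathcal{E}_0 \subseteq \mathcal{E}$ whose $\rho$-neighbourhoods cover $\mathcal{E}$, together with a partition of $\mathcal{E}_0$ into at most $O_n(\gamma^{n(n+1)/2})$ colour classes such that any two distinct ellipsoids of the same colour lie at Banach--Mazur distance greater than $\gamma \rho$ from each other.

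With $\mathcal{E}_0$ in hand I would next establish the approximation claim. Given $K \in \mathcal{K}$, John's theorem furnishes an ellipsoid $E_K \in \mathcal{E}$ with $d(K,E_K) \leq (\log n)/4$. The covering property of $\mathcal{E}_0$ then produces some $E \in \mathcal{E}_0$ with $d(E,E_K) \leq \rho = 1$, and the triangle inequality gives $d(K,E) \leq 1 + (\log n)/4$. Choosing $\alpha_n$ so that $\log \alpha_n \geq 1 + (\log n)/4$ converts this into the inclusion \eqref{close}.

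For the uniqueness-within-a-colour assertion, suppose $E, E' \in \mathcal{E}_0$ both satisfy \eqref{close} for the same $K$. Then $d(E,K)$ and $d(E',K)$ are at most $\log \alpha_n$, so $d(E,E') \leq 2 \log \alpha_n$ by the triangle inequality. Choosing $\gamma$ so that $\gamma \rho = \gamma > 2 \log \alpha_n$---which is compatible with $\alpha_n$ being a dimensional constant---forces $E$ and $E'$ to belong to different colour classes unless they coincide. The total number of colours is then $O_n(\gamma^{n(n+1)/2}) = C(n,\alpha_n)$, as required.

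The main obstacle is the packing-and-colouring fact for $(\mathcal{E},d)$, which the paper treats as essentially known. The honest content there is that $\mathcal{E}$ is an $n(n+1)/2$-dimensional manifold on which $d$ is locally bi-Lipschitz equivalent to a Euclidean metric---for example via the parameterization $\exp(H)\mathbb{B} \leftrightarrow H$ with $H$ symmetric, under which $d$ is locally comparable to $\|H_1 - H_2\|_{\mathrm{op}}$---so that at the single fixed scale $\rho = 1$ a standard greedy construction in a bounded region of a Euclidean space of dimension $n(n+1)/2$ yields a $\rho$-net whose doubling (hence chromatic) number grows like $\gamma^{n(n+1)/2}$, as desired.
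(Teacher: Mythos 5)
Your derivation of the lemma from the net-and-colouring property is exactly the paper's (the paper itself gives no more detail than the preceding paragraph): with $\rho=1$, John's theorem plus the covering property plus the triangle inequality for $d$ gives $d(K,E)\leq 1+(\log n)/4$ for some $E\in\mathcal{E}_0$, and taking $\log\alpha_n\geq 1+(\log n)/4$ and $\gamma>2\log\alpha_n$ gives both \eqref{close} and uniqueness within a colour, with $C(n,\alpha_n)$ colours. That part is fine.

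The weak point is your justification of the covering/colouring fact for $(\mathcal{E},d)$ itself, which is the only real content here. The claim that under $E=\exp(H)\mathbb{B}$ the metric $d$ is locally comparable to $\|H_1-H_2\|_{\mathrm{op}}$ is false with uniform constants: for two congruent ellipses with semiaxes $1$ and $N$ whose principal directions differ by a small angle $\theta$, one has $d\sim N\theta$ (as the paper itself points out), whereas the logarithms differ by roughly $\theta\log N$ in operator norm, so the comparability constant degenerates as the eccentricity grows; moreover $\mathcal{E}$ is unbounded in $d$, so ``a standard greedy construction in a bounded region of Euclidean space'' does not apply as stated. The clean repair uses homogeneity rather than coordinates: $d$ is invariant under applying a fixed invertible linear map to both bodies, so $E\mapsto A_0E$ is an isometry of $(\mathcal{E},d)$ carrying $\mathbb{B}$ to any prescribed ellipsoid $A_0\mathbb{B}$; hence every ball $B_d(E_0,R)$ is isometric to $B_d(\mathbb{B},R)$, which is a compact set of ellipsoids (semiaxis lengths in $[e^{-R},e^{R}]$), and so contains at most $C(n,R)$ points that are $1$-separated. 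Taking a maximal $1$-separated set $\mathcal{E}_0$ (greedy) gives the covering by unit balls, and since each $E\in\mathcal{E}_0$ has at most $C(n,\gamma)$ other members of $\mathcal{E}_0$ within distance $\gamma$, a greedy colouring of this bounded-degree graph uses at most $C(n,\gamma)+1$ colours, with same-colour ellipsoids at distance greater than $\gamma$. This yields the lemma with $C(n,\alpha_n)$ colours; the sharper count $O_n(\gamma^{n(n+1)/2})$ tied to the dimension of $\mathcal{E}$ is not actually needed.
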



\medskip
\noindent
Given $n$ we now fix $\alpha_n$ sufficiently large, and fix our palette $\mathcal{C}$ 
consisting of at most $C(n, \alpha_n)$ colours once and for all so that the conclusion of 
Lemma \ref{ellipsoid} holds. We say that two convex bodies $E$ and $K$ are {\bf close} if 
\eqref{close} holds. So every $K \in \mathcal{K}$ is close to some member of $\mathcal{E}_0$, 
but there is at most one $E \in \mathcal{E}_0$ of a given colour to which it is close.  
For a colour $\Theta \in \mathcal{C}$ let 
$$\mathcal{E}^{\Theta}_0 = \{ E \in \mathcal{E}_0 \; : \; E \mbox{ is of colour  } \Theta\}.$$

\medskip
\noindent
Finally, given $Q$ and $r \geq 0$, 
let
$$S^{(r), \Theta} (Q) = \{ p \in S^{(r)}(Q) \, : \, K_\varepsilon(Z_p \cap Q) {\mbox{ is close to a member of }}
\mathcal{E}^{\Theta}_0\};$$
then we have
$$S^{(r)}(Q) = \bigcup_{\Theta \in \mathcal{C}} S^{(r), \Theta}(Q),$$
and \eqref{colours} is established.


\section{Translates}\label{sec:translates}
\noindent
We now fix $Q$, $r \geq 0$ and a colour $\Theta \in \mathcal{C}$. In this section we establish 
\eqref{translates} for suitable subsets $S^{(r), \Theta}_\alpha(Q) \subseteq 
S^{(r), \Theta}(Q)$ which are indexed by $\alpha \in \mathcal{A}_{Q, r}$, where $\mathcal{A}_{Q, r}$
has cardinality $\sim 2^{-rn} M(Q)^n$. We can assume that  $S^{(r), \Theta}(Q) \neq \emptyset$.

\medskip
\noindent
If $p \in S^{(r), \Theta}(Q)$, the convex body $K_\varepsilon(Z_p \cap Q) \subseteq \mathbb{B}$ 
has volume $\sim 2^{rn}/M(Q)^{n} \lesssim 1$, and it is close to a unique member $E(p)$ of $\mathcal{E}^{\Theta}_0$ 
of comparable volume. Hence we can fit $\sim 2^{-rn} M(Q)^{n}$ disjoint parallel translates of $E(p)$
inside $Q$, with the translations along the principal directions of $E(p)$. Likewise,
if $\eta < 1$ is a numerical scaling factor, we can fit $\sim \eta^{-n}2^{-rn} M(Q)^{n}$ disjoint parallel 
translates of $\eta E(p)$ inside $Q$, with the translations again along the principal directions of $E(p)$.
Indeed, if the lengths of the semiaxes of $E(p)$ are $l_1, \dots, l_n
\leq c$, and the principal directions are 
$e_1, \dots, e_n$, we can place the centres of the translated copies of $\eta E(p)$ at the points  
$x_Q + \eta \sum_j m_j l_j e_j$ for $m_j \in 2 \mathbb{Z}$ and $|m_j| \leq c \eta^{-1}l_j^{-1}$; 
here $x_Q$ is the centre of $Q$.
In this construction
the number of translated copies equals the product
\begin{equation}
    \label{numtranslates}
    c\eta^{-n}(l_1\dots l_n)^{-1}=c \eta^{-n}2^{-rn} M(Q)^{n}.
\end{equation}

\begin{lemma}
    \label{manybisections}
There is a dimensional constant $C_n$ such that if $p \in S^{(r), \Theta}(Q)$
and $\eta < 1$, then $Z_p$ bisects 
at most $ C_n \eta ^{-(n-1)} 2^{-rn} M(Q)^{n}$ disjoint translates of $\eta E(p)$ in $Q$.
\end{lemma}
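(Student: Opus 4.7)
\medskip
\noindent
The plan is to bound the number $N$ of bisected translates by producing, for each bisected translate, a pointwise lower bound on one of $n$ mollified directional surface areas of $Z_p$ inside it, then summing using disjointness together with an upper bound coming from the closeness of $K_\varepsilon(Z_p\cap Q)$ to $E(p)$. Let $l_1,\dots,l_n$ denote the semiaxes of $E(p)$ along orthonormal principal directions $e_1,\dots,e_n$, so that $l_1\cdots l_n\sim \operatorname{vol}E(p)\sim 2^{rn}/M(Q)^n$.

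\medskip
\noindent
For each bisected translate $\eta E(p)+v$, let $\phi$ be the affine map sending it to the unit ball $\mathbb{B}$ by scaling direction $e_i$ by the factor $1/(\eta l_i)$ and translating. Then the image $\phi(Z_p)\cap\mathbb{B}$ is a hypersurface bisecting $\mathbb{B}$, so by Lemma~\ref{bisect} in the Appendix, $\mathcal{H}_{n-1}(\phi(Z_p)\cap\mathbb{B})\gtrsim 1$. Since $\sum_{j=1}^n|n_j|\geq 1$ for any unit vector $n\in\mathbb{R}^n$, one has $\mathcal{H}_{n-1}(\tilde Z)\leq\sum_{j=1}^n\operatorname{surf}_{e_j}(\tilde Z)$ for any hypersurface $\tilde Z$, so by pigeonhole there exists $j=j(v)\in\{1,\dots,n\}$ with $\operatorname{surf}_{e_j}(\phi(Z_p)\cap\mathbb{B})\gtrsim 1$. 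A direct change-of-variables computation gives $\operatorname{surf}_{e_j}(\phi(Y))=\prod_{i\neq j}(\eta l_i)^{-1}\operatorname{surf}_{e_j}(Y)$ for any hypersurface $Y$, so unscaling yields
\[
\operatorname{surf}_{e_{j(v)}}(Z_p\cap(\eta E(p)+v))\gtrsim \eta^{n-1}\prod_{i\neq j(v)}l_i.
\]

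\medskip
\noindent
Pigeonholing over the $n$ possible values of $j(v)$, at least $N/n$ of the bisected translates share a common direction $e_j$. Summing the lower bounds over these translates, using that they are disjoint subsets of $Q$, and using $E(p)\subseteq\alpha_n K_\varepsilon(Z_p\cap Q)$ (which, from the definition of $K_\varepsilon$, forces $\operatorname{surf}_{e_j,\varepsilon}(Z_p\cap Q)\leq \alpha_n/l_j$ since $l_j e_j\in \partial E(p)$ and so $(l_j/\alpha_n)e_j\in K_\varepsilon(Z_p\cap Q)$), we obtain
\[
\frac{N}{n}\cdot\eta^{n-1}\prod_{i\neq j}l_i\;\lesssim\;\operatorname{surf}_{e_j,\varepsilon}(Z_p\cap Q)\;\lesssim\;\frac{1}{l_j},
\]
whence $N\lesssim\eta^{-(n-1)}/(l_1\cdots l_n)\sim\eta^{-(n-1)}2^{-rn}M(Q)^n$, as claimed.

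\medskip
\noindent
The main technical obstacle is the compatibility of the bisection lemma, which naturally produces lower bounds on the \emph{unmollified} $\operatorname{surf}_{e_j}$, with the upper bound from closeness of $E(p)$ to $K_\varepsilon(Z_p\cap Q)$, which necessarily involves the mollified $\operatorname{surf}_{e_j,\varepsilon}$. This is resolved by choosing $\varepsilon$ sufficiently small (depending on $p$, $E(p)$, and the finitely many translates under consideration) so that for every perturbation $p'$ within distance $\varepsilon$ of $p$ in $\mathcal{P}_k^\ast$ the hypersurface $Z_{p'}$ continues to bisect each translate up to a negligible error; averaging over such $p'$ then propagates the unmollified lower bound to $\operatorname{surf}_{e_{j(v)},\varepsilon}$ with only a harmless loss in constants, so that the proof above goes through with $\operatorname{surf}_{e_j,\varepsilon}$ on both sides.
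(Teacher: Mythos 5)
Your argument is correct and essentially the same as the paper's: an affinely rescaled application of the bisection lemma gives a lower bound on some directional surface area of $Z_p$ in each bisected translate, disjointness lets you sum over the translates, and the closeness of $E(p)$ to $K_\varepsilon(Z_p\cap Q)$ supplies the matching upper bound $\operatorname{surf}_{e_j,\varepsilon}(Z_p\cap Q)\lesssim 1/l_j$, with the mismatch between unmollified lower bounds and mollified upper bounds handled exactly as in the paper by an approximate-bisection argument for small $\varepsilon$ (the paper sums the weighted quantity $\sum_j l_j\operatorname{surf}_{e_j}$ over all bisected translates rather than pigeonholing a single direction, but this is immaterial). One small correction: $\varepsilon$ is fixed before the covering argument and so cannot depend on $p$; it must be (and can be) chosen uniformly, since only finitely many ellipsoids $\eta E+\xi$, $E\in\mathcal{E}_0$, are relevant and $p\mapsto\operatorname{vol}(\{p>0\}\cap(\eta E+\xi))$ is uniformly continuous on $\mathbb{S}^N$.
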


\begin{proof}
Suppose that $E(p)$ has principal directions $\{e_j\}$ and corresponding semiaxes with lengths $\{l_j\}$. 
If $Z_p$ bisects a translate $\eta E(p) + \xi$ of $\eta E(p)$, then for at least one $j$ we will 
have\footnote{Here and in the next two displayed equations we are using the {\em unmollified} 
notion of directional surface area $\mbox{surf}_e$.}
$$ \mbox{ surf}_{e_j}(Z_p \cap (\eta E(p) + \xi)) \geq C_n \operatorname{vol}(\eta E(p) + \xi) / \eta l_j
= C_n \eta ^{n-1} \operatorname{vol}E(p)/l_j.$$
This is just the affine-invariant formulation of the fact that a hypersurface which bisects the 
unit ball must have large $(n-1)$-dimensional Hausdorff measure inside
the ball -- see Lemma \ref{bisect} in the Appendix.
So 
$$\sum_{j=1}^n l_j \operatorname{surf}_{e_j}(Z_p \cap (\eta E(p) + \xi))
 \geq C_n \eta ^{n-1} \operatorname{vol}E(p).$$
If now $Z_p$ bisects as many as $ A 2^{-rn} M(Q)^{n}$ disjoint copies of $\eta E(p)$ in $Q$, 
we will have
$$\sum_{j=1}^n l_j \operatorname{surf}_{e_j}(Z_p \cap Q) \geq 
C_n A \eta^{n-1} \operatorname{vol}E(p)\,\, 2^{-rn} M(Q)^{n}.$$

\medskip
\noindent
If $p'$ is another polynomial in $\mathbb{S}^N$ sufficiently close to $p$,
and if $p$ bisects an ellispoid $E$, we can conclude that $p'$ approximately bisects $E$ in the sense that
$p'$ is positive on at least $40\%$ of $E$ and negative on at least $40\%$ of $E$.
Since we are only interested in a finite number of ellipsoids here, namely the translates $\eta E+\xi$
for $E\in\mathcal{E}_0$ with $M(Q)^{-n}\lesssim\operatorname{vol}E\lesssim 1$ for the relevant $Q$,
then by choosing $\varepsilon$ small enough we will have this approximate bisection property for all
polynomials which affect the value of $\operatorname{surf}_{e,\varepsilon}$ in the expressions above.
Therefore we have the estimate
$$\sum_{j=1}^n l_j \operatorname{surf}_{e_j,\varepsilon}(Z_p \cap Q) \geq 
C_n A \eta^{n-1} \operatorname{vol}E(p)\,\, 2^{-rn} M(Q)^{n}.$$

\medskip
\noindent
The definition (cf. \eqref{Kdefn}) of $K_\varepsilon(Z_p \cap Q)$ together with the fact that
$K_\varepsilon(Z_p \cap Q)$ and $E(p)$ are close 
implies that 
$$ l_j  \operatorname{surf}_{e_j,\varepsilon}(Z_p \cap Q) \leq C_n$$
for each $j$, and moreover, as $p \in  S^{(r), \Theta}(Q)$,
$$\operatorname{vol}E(p) \sim 2^{rn}/M(Q)^{n}.$$ 
Therefore $A$ must satisfy $A  \leq C_n \eta^{-(n-1)}$.
\end{proof}

\medskip
\noindent
Choose $\eta$ sufficiently small so that $C_n \eta <c$.
Here $C_n$ is the constant from Lemma~\ref{manybisections}
and $c$ is the constant in~\eqref{numtranslates}.
Then
for each $p \in S^{(r), \Theta}(Q)$, $Z_p$ can bisect only a 
proportion $C_n \eta/c < 1$ of the $c\eta ^{-n} 2^{-rn} M(Q)^{n}$ disjoint
copies of $\eta E(p)$ in $Q$. In particular, $Z_p$ will {\em not} bisect
{\em all} of the available disjoint copies of $\eta E(p)$ in $Q$.

\medskip
\noindent
For each $E(p) \in \mathcal{E}^{\Theta}_0$ of volume $\sim 2^{rn}/M(Q)^{n}$,
the set of translated ellipsoids which are placed into $Q$ in the construction above
is of cardinality $c2^{-rn} M(Q)^{n}$.
We label these ellipsoids with an index $\alpha$ from $\mathcal{A}_{Q,r}=\{1,\dots, c2^{-rn} M(Q)^{n}\}$.
Now take a polynomial $p\in S^{(r), \Theta}_\alpha (Q)$.
Then $p$ is close to a unique member $E(p)\in \mathcal{E}_0^\Theta$ of volume comparable to
$2^{rn}/M(Q)^{n}$ and we can ask whether or not $p$ bisects the $\alpha$'th translate
of $\eta E(p)$. Note that for this question to be meaningful the uniqueness in
the previous sentence is important and if we had not already restricted attention to a single colour then
the uniqueness would not hold.

\medskip
\noindent
For $\alpha \in \mathcal{A}_{Q,r}$ we can thus define
\begin{eqnarray*}
S^{(r), \Theta}_\alpha (Q) 
:= S^{(r), \Theta}(Q) \cap \{Z_p \mbox{ does not bisect the $\alpha$'th translate of 
$\eta E(p)$ in $Q$}\} 
\end{eqnarray*}


\medskip
\noindent
Then, since $Z_p$ cannot bisect all of the translates of $\eta  E(p)$ in $Q$, we have 
$$S^{(r), \Theta}(Q) =  
\bigcup_{\alpha \in \mathcal{A}_{Q, r}} S^{(r), \Theta}_\alpha (Q)$$
as required, where there are $c 2^{-rn} M(Q)^{n}$ terms in the indexing set 
$\mathcal{A}_{Q, r}$. Thus \eqref{translates} is established.

\section{Antipodes}\label{Antipodes}

\medskip
\noindent
In this section we establish \eqref{qw}.

\medskip
\noindent
Fix $Q$, $r \geq 0$, $\Theta \in \mathcal{C}$ and $\alpha \in \mathcal{A}_{Q, r}$. 
For $p \in S^{(r), \Theta}_\alpha (Q)$ let $E(p)$ be as before the unique member of $\mathcal{E}_0^\Theta$ 
to which $K_\varepsilon(Z_p \cap Q)$ is close. Let
$E_\alpha = E_\alpha(p)$ denote the $\alpha$'th translate of $\eta E(p)$ in $Q$.
Since $Z_p$ does not bisect $E_\alpha$,
we have either 
$$ \mbox{ vol }(\{p>0\} \; \cap \; E_\alpha) > \mbox{ vol}(\{p<0 \}
\; \cap \; E_\alpha) $$
(in which case we say that $p \in S^{(r), \Theta +}_\alpha(Q)$), or 
$$ \mbox{ vol }(\{p>0\} \; \cap \; E_\alpha) < \mbox{ vol}(\{p<0 \}
\; \cap \; E_\alpha), $$
(in which case we say that $p \in S^{(r), \Theta -}_\alpha(Q)$).

\medskip
\noindent
Then
$$ S^{(r), \Theta}_\alpha(Q) = S^{(r), \Theta +}_\alpha(Q) \cup S^{(r), \Theta -}_\alpha(Q).$$ 
Moreover 
$$ S^{(r), \Theta -}_\alpha(Q) = -  S^{(r), \Theta +}_\alpha(Q),$$ 
and so to establish \eqref{qw} we wish to show that for all $\alpha$,  
$$S^{(r), \Theta +}_
\alpha(Q) \cap \overline{S^{(r), \Theta -}_\alpha(Q)} = \emptyset.
$$

\medskip
\noindent
To see this, suppose for a contradiction that for some
$\alpha \in \mathcal{A}_{Q, r}$ there is a $p \in S^{(r),
\Theta +}_\alpha(Q)$ and a sequence of $p_m \in  S^{(r), \Theta -}_\alpha(Q)$
which converges to $p$ in $\mathbb{S}^N$. That is, we suppose that 
\begin{equation}\label{last}
\mbox{ vol }(\{p>0\} \; \cap \; E_\alpha (p)) > \mbox{ vol}(\{p<0 \}
\; \cap \; E_\alpha(p)) 
\end{equation}
and
$$ \mbox{ vol }(\{p_m> 0\} \; \cap \; E_\alpha (p_m)) < \mbox{ vol}(\{p_m <0 \}
\; \cap \; E_\alpha(p_m))$$
where  $p_m$ converges to $p$ in $\mathbb{S}^N$.

\begin{lemma}\label{convergence}
Fix $Q$, $r$ and $\Theta$. Suppose that $p \in S^{(r), \Theta}(Q)$, $p_m \in S^{(r), \Theta}(Q)$ 
for $m \in \mathbb{N}$ and that $p_m$ converges to $p$ in $\mathbb{S}^N$. Then for all sufficiently 
large $m$ we have $E(p_m) = E(p)$. If $\alpha \in \mathcal{A}_{Q,r}$ and in addition 
$p, p_m \in S^{(r), \Theta}_\alpha(Q)$, then for $m$ sufficiently large, 
$E_\alpha(p_m) = E_\alpha(p)$.
\end{lemma}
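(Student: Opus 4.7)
The plan is to exploit the mollification to make $p\mapsto K_\varepsilon(Z_p\cap Q)$ continuous on $\mathbb{S}^N$, and then to combine this continuity with the separation of same-coloured ellipsoids provided by Lemma~\ref{ellipsoid} to conclude that the discrete-valued map $p\mapsto E(p)$ is locally constant on $S^{(r),\Theta}(Q)$. Once $E(p_m)=E(p)$ is in hand, the equality $E_\alpha(p_m)=E_\alpha(p)$ is immediate from the explicit construction of the $\alpha$th translate in Section~\ref{sec:translates}, since it only uses the principal directions and the semiaxis lengths of $E(p)$ together with the integer data encoding $\alpha$.

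First I would verify that for each fixed unit vector $e$, the functional $p\mapsto \operatorname{surf}_{e,\varepsilon}(Z_p\cap Q)$ is continuous on $\mathcal{P}_k^\ast$. By definition it is the average of $\operatorname{surf}_e(Z_{p'}\cap Q)$ over $p'\in B(p,\varepsilon)$; as $p_m\to p$ the symmetric difference of these balls tends to zero, and the integrand is uniformly bounded (via Lemma~\ref{cylinder} applied to a bounded collection of tubes covering $Q$), so pointwise convergence in $e$ follows. Joint continuity in $(p,e)$ together with compactness of $\mathbb{S}^{n-1}$ upgrades this to uniform continuity in $e$. From the defining formula \eqref{Kdefn} for the mollified body, uniform continuity of the defining inequality in $u\in\mathbb{B}$ gives continuity of $p\mapsto K_\varepsilon(Z_p\cap Q)$ in the Hausdorff metric on centrally symmetric compact convex subsets of $\mathbb{B}$, and hence in the Banach--Mazur metric on the volumes of interest (the bodies are controlled from below because $p\in S^{(r),\Theta}(Q)$ pins down $\operatorname{vol}K_\varepsilon(Z_p\cap Q)\sim 2^{rn}/M(Q)^n$).

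Next I would deduce $E(p_m)=E(p)$ from Lemma~\ref{ellipsoid}. By definition, $E(p)\in\mathcal{E}_0^\Theta$ is the unique member of the palette of colour $\Theta$ satisfying \eqref{close} with respect to $K_\varepsilon(Z_p\cap Q)$, and closeness is an open condition in the Banach--Mazur metric. Since the colouring was chosen so that any two members of $\mathcal{E}_0$ of the same colour are separated by a Banach--Mazur distance strictly larger than twice the slack $\log \alpha_n$ allowed in \eqref{close}, the ellipsoid $E(p)$ remains the unique element of $\mathcal{E}_0^\Theta$ close to $K_\varepsilon(Z_{p_m}\cap Q)$ once $K_\varepsilon(Z_{p_m}\cap Q)$ is sufficiently close to $K_\varepsilon(Z_p\cap Q)$; the continuity established in the previous step guarantees this for all large $m$, giving $E(p_m)=E(p)$. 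The construction in Section~\ref{sec:translates} then yields $E_\alpha(p_m)=E_\alpha(p)$ on the nose.

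The step I expect to be the main obstacle is the first one: making precise the continuity of the convex-body-valued map $p\mapsto K_\varepsilon(Z_p\cap Q)$. The reason the mollification was introduced in the first place is that the unmollified $\operatorname{surf}_e(Z_p\cap Q)$ can jump as $p$ varies (for example when a double zero of $p$ splits into two simple zeros), so only semicontinuity is available without $\varepsilon$. Provided the averaging radius $\varepsilon$ is chosen smaller than the separation scale required to make the uniqueness in Lemma~\ref{ellipsoid} robust, the rest of the argument is a book-keeping exercise.
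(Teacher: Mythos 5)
Your proposal is correct and follows essentially the same route as the paper: the paper's proof likewise asserts that the mollification makes $p\mapsto K_\varepsilon(Z_p\cap Q)$ continuous (in the multiplicative sense $\gamma_m^{-1}K_m\subseteq K\subseteq\gamma_m K_m$, $\gamma_m\to 1$), and then uses that both $K_\varepsilon(Z_{p_m}\cap Q)$ and $K_\varepsilon(Z_p\cap Q)$ are close to some member of $\mathcal{E}_0^\Theta$ together with the same-colour separation to force $E(p_m)=E(p)$ for large $m$, whence $E_\alpha(p_m)=E_\alpha(p)$. Your extra detail on why the averaged surface areas are continuous fills in what the paper only states, and your separation/triangle-inequality remark is exactly the mechanism the paper relies on.
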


\begin{proof}
Since we are using the mollified version of the directional surface area and quantities defined in terms of it, 
the convergence of $p_m$ to $p$ in $\mathbb{S}^N$ implies that the convex bodies $K_\varepsilon(Z_{p_m} \cap Q)$
converge to $K_\varepsilon(Z_{p} \cap Q)$ as $m \to \infty$\footnote{in the sense that there is a sequence $\gamma_m \geq 1$ 
with $\gamma_m \to 1$ such that
$\gamma_m^{-1} K_\varepsilon (Z_{p_m} \cap Q) \subseteq K_\varepsilon(Z_{p} \cap Q) \subseteq \gamma_m 
K_\varepsilon(Z_{p_m} \cap Q)$} and in particular $K_\varepsilon(Z_{p_m} \cap Q)$ and $K_\varepsilon(Z_{p} \cap Q)$
are close for $m$ sufficiently large.
Since $p$ and $p_m$ are members of $S^{(r), \Theta}(Q)$ then
$K_\varepsilon(Z_{p_m} \cap Q)$ and $K_\varepsilon(Z_p \cap Q)$
must be close to \emph{some} member of $\mathcal{E}_0^\Theta$
and
thus, for $m$ sufficiently large, 
they
are close to the {\em same} member of $\mathcal{E}^{\Theta}_0$, which 
must be $E(p)$. In particular, for $m$ sufficiently large, we have $E(p_m) = E(p)$ and consquently 
$E_\alpha(p_m) = E_\alpha(p)$ for all $\alpha$.
\end{proof}
\noindent
(It is at the end of the proof of this lemma, and in the construction of the sets $S^{(r), \Theta}_\alpha(Q)$, 
where the relevance of the earlier decomposition into colours becomes clear.)

\medskip
\noindent
So, for $m$ sufficiently large we have
$$ \mbox{ vol }(\{p_m> 0\} \; \cap \; E_\alpha (p)) < \mbox{ vol}(\{p_m <0 \}
\; \cap \; E_\alpha(p))$$
which, upon taking limits and using the fact that vol $(\{ p = 0\}) = 0$ as $p$ is non-zero,
implies
$$ \mbox{ vol }(\{p > 0\} \; \cap \; E_\alpha (p)) \leq \mbox{ vol}(\{p < 0 \}
\; \cap \; E_\alpha(p)),$$
which is in contradiction with \eqref{last}. Hence
$$S^{(r), \Theta +}_\alpha(Q) \cap \overline{S^{(r), \Theta -}_\alpha(Q)} = \emptyset,$$
and we are therefore finished.

\section{Appendix -- Bisecting balls}\label{app}

\medskip
\noindent
In this appendix we indicate a simple proof of the (geometrically obvious) fact that a hypersurface
which bisects the unit ball must have large surface area inside the
ball. Let $\mathbb{B}$ be the closed unit ball in $\mathbb{R}^n$ and suppose $p: \mathbb{R}^n \to
\mathbb{R}$ is a polynomial. Let
$$E = \{x \in \mathbb{B} \, : \, p(x) \leq 0 \}$$
and 
$$F = \{x \in \mathbb{B} \, : \, p(x) \geq 0 \}.$$

\begin{lemma}\label{bisect}
If $ {\rm vol} \, (E) = a \, {\rm vol} \, (\mathbb{B})$ and
$ {\rm vol} \, (F) = b \, {\rm vol} \, (\mathbb{B})$ 
where $a + b = 1$, then 
$$\mathcal{H}_{n-1} (\{ x \in \mathbb{B} \, : \, p(x) = 0 \}) 
>  \frac{1}{2}\left(a^{(n-1)/n} + b^{(n-1)/n} -1 \right)
\mathcal{H}_{n-1}(\mathbb{S}^{n-1}).$$
\end{lemma}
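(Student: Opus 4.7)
The plan is to apply the classical Euclidean isoperimetric inequality separately to the two sets $E$ and $F$, and then combine the resulting bounds via a boundary-accounting argument on $\mathbb{S}^{n-1}$. Recall that for any measurable set $A\subseteq \mathbb{R}^n$ one has the sharp inequality
$$\mathrm{Per}(A) \;\geq\; \mathcal{H}_{n-1}(\mathbb{S}^{n-1})\cdot\bigl(\mathrm{vol}(A)/\mathrm{vol}(\mathbb{B})\bigr)^{(n-1)/n},$$
where $\mathrm{Per}$ denotes the De Giorgi perimeter, and equality forces $A$ to be a Euclidean ball.

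First I would localise the boundary. Since $E=\{p\leq 0\}\cap\mathbb{B}$ and $F=\{p\geq 0\}\cap\mathbb{B}$, the reduced boundaries are contained in $Z\cup\mathbb{S}^{n-1}$; indeed
$$\partial^{\ast}E\subseteq Z\cup (E\cap\mathbb{S}^{n-1}),\qquad \partial^{\ast}F\subseteq Z\cup (F\cap\mathbb{S}^{n-1}).$$
Writing $s:=\mathcal{H}_{n-1}(\mathbb{S}^{n-1})$ and applying the isoperimetric inequality to $E$ and to $F$, one obtains
$$\mathcal{H}_{n-1}(Z)+\mathcal{H}_{n-1}(E\cap\mathbb{S}^{n-1})\;\geq\; s\,a^{(n-1)/n},$$
$$\mathcal{H}_{n-1}(Z)+\mathcal{H}_{n-1}(F\cap\mathbb{S}^{n-1})\;\geq\; s\,b^{(n-1)/n}.$$

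Adding these two bounds and observing that $E\cap\mathbb{S}^{n-1}$ and $F\cap\mathbb{S}^{n-1}$ together cover $\mathbb{S}^{n-1}$ with overlap $Z\cap\mathbb{S}^{n-1}$, I would next argue that this overlap is $\mathcal{H}_{n-1}$-null. Either $p$ vanishes identically on $\mathbb{S}^{n-1}$, in which case $\mathbb{S}^{n-1}\subseteq Z$ and the lemma is trivial, or $p|_{\mathbb{S}^{n-1}}$ is a non-trivial real-analytic function whose zero set is $(n-2)$-rectifiable and hence $\mathcal{H}_{n-1}$-negligible. Therefore
$$\mathcal{H}_{n-1}(E\cap\mathbb{S}^{n-1})+\mathcal{H}_{n-1}(F\cap\mathbb{S}^{n-1})=s,$$
and rearranging the sum yields
$$2\mathcal{H}_{n-1}(Z)\;\geq\; s\bigl(a^{(n-1)/n}+b^{(n-1)/n}-1\bigr),$$
which is the non-strict version of the claim.

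The main obstacle is promoting this to the strict inequality $>$. The point is the rigidity in the Euclidean isoperimetric inequality: equality in the bound applied to $E$ (respectively $F$) forces $E$ (respectively $F$) to be a Euclidean ball. But $E\cup F=\mathbb{B}$ with $E\cap F$ contained in the measure-zero set $Z$, so if $E$ were a Euclidean ball of radius $a^{1/n}<1$, then $F$ would be $\mathbb{B}$ minus such a ball, which is never a Euclidean ball; and the degenerate cases $a\in\{0,1\}$ are handled directly (either $Z\supseteq\mathbb{S}^{n-1}$ up to null sets, or the right-hand side is non-positive while $\mathcal{H}_{n-1}(Z)>0$ under the implicit assumption that $p$ genuinely changes sign). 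Hence at least one of the two isoperimetric applications is strict, upgrading the inequality to the strict form stated.
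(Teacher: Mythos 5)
Your proposal is correct and is essentially the paper's own argument: two applications of the isoperimetric inequality, to $E$ and to $F$, combined with boundary bookkeeping that isolates the contribution of $\{p=0\}\cap\mathbb{B}$ from the spherical part, and the same rigidity observation (both $E$ and $F$ cannot be balls) to get strictness, with the degenerate $a\in\{0,1\}$ cases set aside just as implicitly as in the paper. The only cosmetic difference is that you work with reduced boundaries and therefore need the extra (correct) remark that $\mathcal{H}_{n-1}(Z\cap\mathbb{S}^{n-1})=0$, whereas the paper's inclusion--exclusion with topological boundaries produces that overlap term with a favourable sign, so it can simply be dropped.
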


\begin{proof}
It is easy to see that 
$ \partial E \cup \partial F = \mathbb{S}^{n-1} \cup (E \cap F)$
and
$ \partial E \cap \partial F = E \cap F.$
Since
$$ \mathcal{H}_{n-1}( \partial E \cup \partial F) = 
\mathcal{H}_{n-1}( \partial E) + \mathcal{H}_{n-1}(\partial F) 
- \mathcal{H}_{n-1}( \partial E \cap \partial F)$$
and 
$$ \mathcal{H}_{n-1} (\mathbb{S}^{n-1} \cup (E \cap F)) =
\mathcal{H}_{n-1} (\mathbb{S}^{n-1}) + \mathcal{H}_{n-1} (E \cap F)
- \mathcal{H}_{n-1} (\mathbb{S}^{n-1} \cap E \cap F)$$
we have
$$ 2 \mathcal{H}_{n-1} (E \cap F)
= \mathcal{H}_{n-1}( \partial E) +
\mathcal{H}_{n-1}(\partial F) -  \mathcal{H}_{n-1} (\mathbb{S}^{n-1})
+ \mathcal{H}_{n-1}
(\mathbb{S}^{n-1} \cap E \cap F),$$
and so
$$\mathcal{H}_{n-1} (E \cap F) 
\geq \frac{1}{2}\left(\mathcal{H}_{n-1}( \partial E) +
\mathcal{H}_{n-1}(\partial F) -  \mathcal{H}_{n-1} (\mathbb{S}^{n-1})\right).$$

\medskip
\noindent
By the isoperimetric inequality we have
$$\mathcal{H}_{n-1}( \partial E) \geq a^{(n-1)/n}
\mathcal{H}_{n-1}(\mathbb{S}^{n-1})$$
and 
$$\mathcal{H}_{n-1}( \partial F) \geq b^{(n-1)/n}
\mathcal{H}_{n-1}(\mathbb{S}^{n-1}),$$
(with strict inequality in at least one place) so that 
$$\mathcal{H}_{n-1} (E \cap F) 
> \frac{1}{2}\left(a^{(n-1)/n} + b^{(n-1)/n} -1 \right)
\mathcal{H}_{n-1}(\mathbb{S}^{n-1})$$
as required. 
\end{proof}

\medskip
\noindent
Since for $n \geq 2$ and $0 < a, b < 1$ with $a+b =1$ we have $ (a+b)^{(n-1)/n} < a^{(n-1)/n} + b^{(n-1)/n}$,
this establishes the desired bound.
\medskip
\noindent
 
\medskip
\noindent
The following question may be of interest. 
Let $K \subseteq \mathbb{R}^n$ be a symmetric convex body, which we can normalise so that its John ellipsoid 
is the unit ball. Within the class of polynomial hypersurfaces $Z$ which cut $K$ in the proportions $a:b$, 
how do we minimise the surface area of $Z \cap K$?

\bibliographystyle{plain}

\end{document}